\renewcommand{\maketag@@@}[1]{\hbox{\m@th\normalsize\normalfont#1}}%
\newacronym{TFDE}{TFDE}{time-fractional diffusion equation}
\begin{document}

\title{Solving time-fractional diffusion equations with Robin boundary conditions via fractional Hamiltonian boundary value methods}
\titlerunning{Solving TFDEs with Robin boundary conditions via FHBVMs} 

\author{Qian Luo\textsuperscript{1,2}
\and Ai-Guo Xiao\textsuperscript{1,2}
\and Xiao-Qiang Yan\textsuperscript{1,2,3*}
\and Jing-Min Xia\textsuperscript{4}
\thanks{This work is supported by NSFC (Nos. 12201537,12471391,12101525,12201636), the Research Support Funding of Xiangtan University (Nos. 22QDZ23,21QDZ16), the Youth Project of Hunan Provincial Natural Science Foundation of China (No. 2024JJ6418), the Excellent Youth Project of Hunan Provincial Department of Education (No. 24B0164), and the Postgraduate Scientific Research Innovation Project of Hunan Province in China (No. CX20240590).}}

\authorrunning{Qian Luo, Ai-Guo Xiao, Xiao-Qiang Yan, Jing-Min Xia} 

\institute{Q. Luo  \at \email{luoqian@smail.xtu.edu.cn}
\and A.G. Xiao  \at \email{xag@xtu.edu.cn}
\and X.Q. Yan  \Letter\at \email{xqyan1992@xtu.edu.cn}
\and J.M. Xia \at \email{jingmin.xia@nudt.edu.cn}
\and \textsuperscript{1} School of Mathematics and Computational Science $\&$ Hunan Key Laboratory for Computation and Simulation in Science and Engineering, Xiangtan University, Hunan 411105, China
\at \textsuperscript{2} Key Laboratory of Intelligent Computing and Information Processing of Ministry of Education, Hunan 411105, China
\at \textsuperscript{3} National Center for Applied Mathematics in Hunan, Xiangtan University, Hunan 411105, China
\at \textsuperscript{4} College of Meteorology and Oceanography, National University of Defense Technology, Changsha 410072, China}

\date{Received: date / Accepted: date}

\maketitle

\begin{abstract}
In this paper, we propose a novel numerical scheme for solving time-fractional reaction-diffusion problems with Robin boundary conditions, where the time derivative is in the Caputo sense of order $\alpha\in(0,1)$. The existence and uniqueness of the solution is proved. Our proposed method is based on the spectral collocation method in space and Fractional Hamiltonian boundary value methods in time. For the considered spectral collocation method, the basis functions used are not the standard polynomial basis functions, but rather adapt to Robin boundary conditions, and the exponential convergence property is provided. The proposed procedure achieves spectral accuracy in space and is also capable of getting spectral accuracy in time. Some numerical examples are provided to support the theoretical results.
\keywords{Time-fractional reaction-diffusion equation \and Robin boundary condition \and Spectral collocation methods \and  Fractional Hamiltonian boundary value methods}
\subclass{35R11 \and 65M70 \and 65M50 \and 65M12}
\end{abstract}

{\section{Introduction}}\label{intro}
\setcounter{theorem}{0}\setcounter{subsection}{0} \setcounter{equation}{0}\setcounter{figure}{0} \setcounter{table}{0}
Time-fractional diffusion equations have attracted considerable attention due to their wide-ranging applications in various scientific fields. They are particularly useful in describing complex phenomena in fields such as physics \cite{yuste2002subdiffusion}, chemistry and biochemistry \cite{yuste2002subdiffusion}, mechanical engineering \cite{magin2009solving}, medicine \cite{chen2010fractional}, and electronics \cite{kirane2013inverse}. Thus, there has been significant interest in developing numerical schemes for their solution.

Consider the numerical solutions to the time-fractional reaction-diffusion problem
\begin{align}\label{1.1}
_0^CD_t^{\alpha} u(x,t) = \partial_{x}^{2} u(x,t) -c(x)u(x,t) + f(x,t), \quad (x,t) \in (a,b) \times (0,T],
\end{align}
which satisfies the initial condition
\begin{align}\label{1.2}
u(x,0)=u_0(x),\quad x\in (a,b),
\end{align}
and Robin boundary conditions
\begin{align}\label{1.3}
\sigma_0u(a,t)+\beta_0u_x(a,t)=0,\quad \sigma_1u(b,t)+\beta_1u_x(b,t)=0,\quad t\in(0,T],
\end{align}
where $\alpha\in(0,1)$, $c(x)\ge 0,\ f\in \mathcal{C}([a,b]\times [0,T])$, and the initial function $u_0(x)\in \mathcal{C}([a,b])$. We assume that the coefficients $\sigma_0,\sigma_1,\beta_0$ and $\beta_1$ in the boundary conditions (\ref{1.3}) are constants and satisfy $\sigma_0>0$, $\sigma_1>0$, $\beta_0\le 0$ and $\beta_1\ge 0$. For convenience, let $G=(a,b),\ \bar{G}=[a,b],\ \Omega=(a,b)\times(0,T]$ and $\bar{\Omega}=[a,b]\times[0,T]$. In (\ref{1.1}), $_0^CD_t^{\alpha } u$ denotes the Caputo fractional derivative, which is defined as
\begin{align}\label{1.4}
_0^CD_t^{\alpha } u(x,t)=\frac{1}{\Gamma(1-\alpha)}\int_0^t(t-s)^{-\alpha}\frac{\partial u(x,s)}{\partial s} ds.
\end{align}

More general Robin boundary conditions $\sigma_0u(a,t)+\beta_0u_x(a,t)=g_1(t)$ and $\sigma_1u(b,t)+\beta_1u_x(b,t)=g_2(t)$ are easily reduced to the homogeneous case of (\ref{1.3}) by a standard linear change of variable, where $g_1(t)$ and $g_1(t)$ are smooth functions. Additionally, it is worth noting that the homogeneous Robin boundary conditions (\ref{1.3}) will become the homogeneous Dirichlet boundary conditions by setting $\beta_0=0$ and $\beta_1=0$.

A large number of numerical methods have been developed for the time-fractional reaction-diffusion problem. However, the majority of these methods primarily focus on Dirichlet boundary conditions, employing a variety of numerical techniques; see, for example, \cite{lin2007finite,mustapha2014discontinuous,alikhanov2015new,esmaeili2015pseudo,burrage2017numerical}. In addition, several researchers have also considered other boundary conditions, such as a specific class of Robin boundary conditions \cite{abu2018numerical,wang2020analysis}, Robin fractional derivative boundary conditions \cite{zhang2021efficient,niazai2022numerical} and fractional dynamic boundary conditions \cite{zhao2024direct}. To the best of our knowledge, the time-fractional diffusion problem involving a reaction term of the form $c(x)u$ has received relatively little attention. Most existing works, such as \cite{stynes2017error,huang2018optimal,gracia2018fitted}, focus on problems with Dirichlet boundary conditions. Only a few studies, like \cite{huang2019direct}, consider Robin boundary conditions. Research on such equations with Robin boundary conditions is particularly scarce, especially in the context of high-precision schemes for numerical solutions. Therefore, it is both meaningful and necessary to study the high-precision numerical schemes for problem (\ref{1.1}) subject to the Robin boundary conditions (\ref{1.3}).

Fractional Hamiltonian boundary value methods (FHBVMs) is a new class of numerical method recently introduced for solving fractional differential equations (FDEs) in \cite{brugnano2024spectrally,brugnano2024numerical,brugnano2025analysis}, as an extension of Hamiltonian boundary value methods (HBVMs). The HBVM method is a special low-rank Runge-Kutta method originally designed for Hamiltonian problems by Brugnano, et al., as discussed in the book \cite{brugnano2016line} and its references. This method has been gradually extended along several direction, such as Hamiltonian partial differential equations  \cite{barletti2018energy,brugnano2018class,luo2025hamiltonian,luo2025invariant}, delay differential equations  \cite{brugnano2022new,luo2024class}, and FDEs \cite{brugnano2024spectrally,brugnano2024numerical,brugnano2025analysis}. A main feature of HBVMs is the fact that they can gain spectrally accuracy in numerically approximate \cite{brugnano2015reprint,brugnano2014efficient}, and this feature has been recently extended to the FDE case \cite{brugnano2024spectrally,brugnano2024numerical}. Moreover, Brugnano et al. demonstrate the superiority of the FHBVM method by comparing it with other methods in \cite{brugnano2025fde}, and provide a corresponding linear stability analysis in \cite{brugnano2025analysis}. However, current research largely centers on FDEs, the application of FHBVMs in time-fractional partial differential equations has been relatively unexplored.

In this paper, we propose a novel class of numerical schemes aimed at solving time-fractional diffusion equations (\ref{1.1}) subject to general Robin boundary conditions (\ref{1.3}). We apply the line method, which first discretize the fractional partial differential equations in the spatial direction to yield a semi-discrete FDEs, and then solve the resulting FDEs by using some appropriate time algorithm. The main idea we present here is to combine a spectral collocation method for spatial discretization with FHBVMs for temporal discretization. We begin by selecting a basis function from \cite{ahmed2023highly} that is suited to the Robin boundary conditions (\ref{1.3}), and use it in the spectral collocation method to derive a semi-discrete scheme in FDEs form. This spatial discretization achieves spectral accuracy. Then, we use the FHBVM method to solve the semi-discrete FDEs in time. The time mesh is selected as a mixed mesh consisting of graded mesh near the initial time and uniform mesh thereafter, which will bring greater efficiency. It is worth mentioning that achieving spectral accuracy in space can lead to a large dimensionality of semi-discrete FDEs, resulting in significant computational costs when using general iterative methods. Therefore, to effectively avoid it, we give some setting conditions to enable the selection of fixed point iteration or blended iteration methods at grid nodes according to the situation to solve the nonlinear problems generated by FHBVMs. The proposed procedure is also capable of getting spectral accuracy in time. In addition, our scheme is superior to the one presented in \cite{lin2007finite}, that this will be demonstrated by the numerical tests in Section \ref{s5:3}.

The outline of the paper is as follows. In Section \ref{s2}, we prove the existence and uniqueness of the exact solution $u(x,t)$ to problem (\ref{1.1}). Section \ref{s3} is devoted to build a suitable spectral collocation method that adapts to Robin boundary conditions to generate the desirable semi-discrete FDEs. The time discretization FHBVM method is then applied to solve the obtained semi-discrete system, and the implementation of our method is discussed in Section \ref{s4}. Numerical examples are illustrated Section \ref{s5} to show the results of the numerical simulations with the theoretical predictions. The final section provides brief conclusions.
\vskip 0.6cm

{\section{Existence and uniqueness of the solution}\label{s2}}
\setcounter{subsection}{0} \setcounter{equation}{0}

Luchko \cite{luchko2012initial} employs the separation of variables and eigenvalue expansions to construct a classical solution to the problem (\ref{1.1}) with Dirichlet boundary conditions in an infinite series form. In this section, we use the same method to study the existence and uniqueness of the solutions to our problem (\ref{1.1}) with Robin boundary conditions (\ref{1.3}). Although the idea of the proof can be obtained roughly by imitating \cite{luchko2012initial}, we briefly give its main derivation for the sake of completeness of the paper. In this part, $\|\cdot\|_p$ is defined as the norm of the Hilbert space $L^p(G)$.

Let $\{({\lambda}_i,\psi_i):i=1,2,\cdots\}$ be the eigenvalues and eigenfunctions of the Sturm-Liouville two-point boundary problem
\begin{align}\label{2.1}
\mathcal{L}\psi_i:=-\psi''_i+c\psi_i={\lambda}_i\psi_i,\quad \sigma_0\psi_i(a)+\beta_0\psi'_i(a)=\sigma_1\psi_i(b)+\beta_1\psi'_i(b)=0,
\end{align}
defined on the interval $(a,b)$, where the eigenfunctions $\psi_i$ satisfy $\|\psi_i\|_2=1$ for all $i$. Since $\mathcal{L}$ is a positive definite and self-adjoint linear operator, it follows from the theory of eigenvalue problems that $\|\psi_i\|_\infty\le C$ \cite[p.~335]{courant1953methods} with a constant $C$ and $\lambda_i\approx (i\pi)^2/(b-a)^2>0$ \cite[p.~415]{courant1953methods} for any $i$. A standard separation of variables technique leads to the following solution (see \cite[(4.29)]{luchko2012initial})
\begin{align}\label{2.2}
u(x,t)=\sum_{i=1}^{\infty}\left[(u_0,\psi_i)E_{\alpha,1}(-{\lambda}_it^\alpha)+J_i(t)\right]\psi_i(x),
\end{align}
where $u_0$ is the initial function defined in (\ref{1.2}) and
\begin{align*}
J_i(t):=\int_0^ts^{\alpha-1}E_{\alpha,\alpha}(-{\lambda}_is^\alpha)f_i(t-s)ds,\quad f_i(t):=(f(\cdot,t),\psi_i(\cdot)),
\end{align*}
with $(\cdot,\cdot)$ representing the inner product in the space $L^2(G)$. Here, $E_{\sigma,\beta}$ is the generalised Mittag-Leffler function \cite{podlubny1998fractional}, which is defined as
\begin{align}\label{2.3}
E_{\sigma,\beta}(z):=\sum_{m=0}^{\infty}\frac{z^m}{\Gamma(\sigma m+\beta)}.
\end{align}
In particular, when $\beta=1$, $E_{\sigma,1}$ is often written as $E_\sigma$.

First, we tackle the uniqueness of the solution to the problem (\ref{1.1})-(\ref{1.3}). We begin by providing a few preliminary results, which will be needed to derive the main ones. Define the space
\begin{align*}
\mathcal{CW}^1:=\mathcal{C}(\bar{\Omega})\cap \mathcal{W}_t^1(0,T]\cap \mathcal{C}^2(G),
\end{align*}
where $\mathcal{W}_t^1(0,T]$ is the space of functions $\phi\in \mathcal{C}^1(0,T]$ such that $\phi'\in L(0,T)$, and $L(0,T)$ is the space of measurable functions that are Lebesgue integrable in $(0,T)$. The problem (\ref{1.1})-(\ref{1.3}) satisfies the maximum principle as follows.

\begin{lemma}\label{l2.1}
Let $u(x,t)\in \mathcal{CW}^1$ be a solution of (\ref{1.1}) in the domain $\bar{\Omega}$. Assume that $f(x,t) \le 0$ for $(x,t)\in \bar{\Omega}$, and that $u(x,t)$ satisfies the boundary conditions $\sigma_0u(a,t)+\beta_0u_x(a,t) \le 0$ and $\sigma_1u(b,t)+\beta_1u_x(b,t) \le 0$,\ $t\in(0,T]$. Then, either $u(x,t)\le 0,\ (x,t)\in\bar{\Omega}$ or the function $u$ attains its positive maximum on the $\bar{G}\times \{0\}$, i.e.,
\begin{align*}
u(x,t)\le \max{\left\{\max_{x\in\bar{G}}u(x,0),0\right\}},\quad \forall (x,t)\in\bar{\Omega}.
\end{align*}
\end{lemma}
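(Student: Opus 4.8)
The plan is to argue by contradiction, using the extremum principle for the Caputo fractional derivative as the substitute for the sign of $u_t$ at a time-maximum that drives the classical ($\alpha=1$) parabolic maximum principle. The key fractional-calculus ingredient I would first isolate as a preliminary lemma (following Luchko) is: if $g\in\mathcal{W}_t^1(0,T]\cap\mathcal{C}[0,T]$ attains its maximum over $[0,T]$ at a point $t_0\in(0,T]$, then
\[
{}_0^CD_t^{\alpha} g(t_0)\ge \frac{t_0^{-\alpha}}{\Gamma(1-\alpha)}\bigl(g(t_0)-g(0)\bigr)\ge 0,
\]
with \emph{strict} positivity whenever $g(t_0)>g(0)$. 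I would state and cite this before the main argument, since it is the only genuinely fractional step.

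\textbf{Setup.} Write $M_0:=\max\{\max_{x\in\bar G}u(x,0),\,0\}$ and suppose, for contradiction, that $M:=\max_{\bar\Omega}u>M_0$, attained at some $(x_0,t_0)\in\bar\Omega$. Because $M>M_0\ge u(x,0)$ for every $x\in\bar G$, the maximizer cannot lie on $\bar G\times\{0\}$, so necessarily $t_0>0$. It then remains to exclude both $x_0\in\{a,b\}$ and $x_0\in G$.

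\textbf{Boundary points.} If $x_0=a$, then since $u(\cdot,t_0)$ has a maximum at the left endpoint we get $u_x(a,t_0)\le 0$; combined with $\sigma_0>0$, $u(a,t_0)=M>0$ and $\beta_0\le 0$ this gives $\sigma_0u(a,t_0)+\beta_0u_x(a,t_0)>0$, contradicting the hypothesis that this quantity is $\le 0$. Symmetrically, if $x_0=b$ then $u_x(b,t_0)\ge 0$, and $\sigma_1>0$, $\beta_1\ge 0$ force $\sigma_1u(b,t_0)+\beta_1u_x(b,t_0)>0$, again a contradiction. This is precisely where the sign conditions $\sigma_0,\sigma_1>0$, $\beta_0\le 0$, $\beta_1\ge 0$ are essential.

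\textbf{Interior point and conclusion.} Thus $x_0\in G$. Fixing $x=x_0$, the function $g(t):=u(x_0,t)$ attains its maximum over $[0,T]$ at $t_0$ with $g(t_0)=M>M_0\ge g(0)$, so the extremum principle yields ${}_0^CD_t^{\alpha} u(x_0,t_0)>0$. On the other hand, $x_0$ is an interior spatial maximum, so $\partial_x^2u(x_0,t_0)\le 0$; together with $-c(x_0)u(x_0,t_0)\le 0$ (as $c\ge 0$ and $u(x_0,t_0)>0$) and $f(x_0,t_0)\le 0$, equation (\ref{1.1}) forces ${}_0^CD_t^{\alpha} u(x_0,t_0)\le 0$, a contradiction. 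Hence no such $M$ exists and the stated bound holds. The main obstacle is the fractional extremum principle itself: unlike the integer-order case, establishing its strict form and verifying that the regularity $u\in\mathcal{CW}^1$ is enough to apply it pointwise is the delicate part, whereas the remaining boundary and interior arguments are routine sign-chasing.
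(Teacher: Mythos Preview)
Your proof is correct and follows essentially the same approach as the paper: argue by contradiction, rule out the lateral boundary points $x_0\in\{a,b\}$ using the sign conditions on $\sigma_i,\beta_i$ exactly as the paper does, and then handle the interior case via Luchko's extremum principle for the Caputo derivative. The only difference is cosmetic: the paper stops after excluding the boundary and refers the reader to Theorem~2 of \cite{luchko2009maximum} for the interior step, whereas you spell that step out explicitly.
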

\begin{proof}
We begin by assuming that the statement of the lemma does not hold, i.e., there exists a maximum point $(x^*,t^*)\in[a,b]\times(0,T]$ such that
\begin{align}\label{2.4}
u(x^*,t^*)> \max{\left\{\max_{x\in\bar{G}}u(x,0),0\right\}}>0.
\end{align}
First, consider the case where $x^*=a$. In this case, we must have $u(a,t^*)>0$. Combining this with (\ref{2.4}), we deduce that $u_x(a,t^*)\le 0$. Given that $\sigma_0>0$ and $\beta_0\le 0$, a routine computation yields $\sigma_0u(a,t^*)+\beta_0u_x(a,t^*)>0$, which contradicts our assumptions of the boundary conditions. A similar argument can be applied when $x^*=b$, leading to a contradiction in that case as well. Therefore, there can only exist a maximum point $(x^*,t^*)\in(a,b)\times(0,T]$ such that (\ref{2.4}) holds. The remaining steps of the proof are analogous to those in Theorem 2 of \cite{luchko2009maximum}, and are left to the reader.~\hfill $\square$
\end{proof}

By replacing $u$ with $-u$ in Lemma \ref{l2.1}, we can easily deduce the following lemma, which establishes the minimum principle.

\begin{lemma}\label{l2.2}
Let $u(x,t)\in \mathcal{CW}^1(G)$ be a solution of (\ref{1.1}) in the domain $\bar{\Omega}$. Assume that $f(x,t) \ge 0$ on $\bar{\Omega}$, and $u(x,t)$ satisfies the boundary conditions $\sigma_0u(a,t)+\beta_0u_x(a,t) \ge 0$ and $\sigma_1u(b,t)+\beta_1u_x(b,t) \ge 0$,\ $t\in(0,T]$. Then, either $u(x,t)\ge 0,\ (x,t)\in\bar{\Omega}$ or the function $u$ attains its negative minimum on the $\bar{G}\times \{0\}$, i.e.,
\begin{align*}
u(x,t)\ge \min{\left\{\min_{x\in\bar{G}}u(x,0),0\right\}},\quad \forall (x,t)\in\bar{\Omega}.
\end{align*}
\end{lemma}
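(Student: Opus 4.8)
The plan is to deduce Lemma \ref{l2.2} directly from the maximum principle in Lemma \ref{l2.1} via the sign-reversal substitution $v:=-u$, as the surrounding text indicates. The first step is to confirm that $v$ again solves an equation of the form (\ref{1.1}). Since the Caputo operator $_0^CD_t^{\alpha}$ is linear in its argument and the spatial operator $\partial_x^2-c(x)$ is linear as well, applying (\ref{1.1}) to $u$ and multiplying through by $-1$ gives
\begin{align*}
_0^CD_t^{\alpha}v=\partial_x^2 v-c(x)v+\tilde f,\qquad \tilde f:=-f,
\end{align*}
so $v$ solves (\ref{1.1}) with source term $\tilde f$ in place of $f$.

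Next I would verify that $v$ satisfies every hypothesis of Lemma \ref{l2.1}. The space $\mathcal{CW}^1$ is linear, so $v=-u\in\mathcal{CW}^1$. The assumption $f\ge 0$ on $\bar{\Omega}$ becomes $\tilde f=-f\le 0$ on $\bar{\Omega}$, matching the sign condition imposed on the source in Lemma \ref{l2.1}. For the boundary data, multiplying the two hypotheses $\sigma_0u(a,t)+\beta_0u_x(a,t)\ge 0$ and $\sigma_1u(b,t)+\beta_1u_x(b,t)\ge 0$ by $-1$ yields
\begin{align*}
\sigma_0v(a,t)+\beta_0v_x(a,t)\le 0,\qquad \sigma_1v(b,t)+\beta_1v_x(b,t)\le 0,\quad t\in(0,T],
\end{align*}
which are exactly the boundary inequalities required by Lemma \ref{l2.1}; the sign restrictions $\sigma_0,\sigma_1>0$, $\beta_0\le 0$, $\beta_1\ge 0$ are unaffected by the substitution and remain in force.

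With these checks in place, Lemma \ref{l2.1} applied to $v$ asserts that either $v(x,t)\le 0$ on $\bar{\Omega}$, or
\begin{align*}
v(x,t)\le\max\left\{\max_{x\in\bar{G}}v(x,0),0\right\},\qquad\forall\,(x,t)\in\bar{\Omega}.
\end{align*}
The final step is to translate this conclusion back to $u$. The alternative $v\le 0$ is simply $u\ge 0$ on $\bar{\Omega}$. In the remaining case I substitute $v=-u$ and combine $\max_{x\in\bar{G}}(-u(x,0))=-\min_{x\in\bar{G}}u(x,0)$ with the elementary identity $-\max\{-r,0\}=\min\{r,0\}$ to recover $u(x,t)\ge\min\{\min_{x\in\bar{G}}u(x,0),0\}$, which is precisely the stated minimum principle. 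I do not expect a genuine obstacle here; the only point requiring care is the consistent bookkeeping of signs, in particular the conversion of the outer $\max$ into a $\min$, which must be synchronized with the flips of the source term and of both boundary inequalities.
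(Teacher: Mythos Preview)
Your proposal is correct and follows exactly the route indicated by the paper, which simply remarks that Lemma~\ref{l2.2} is obtained from Lemma~\ref{l2.1} by replacing $u$ with $-u$. Your careful verification of the sign flips in the source term, the boundary inequalities, and the final $\max$/$\min$ conversion fills in precisely what the paper leaves implicit.
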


With the help of the preceding two lemmas, we can prove the following theorem by following the approach in \cite[Theorem 4]{luchko2009maximum}, which provides an a priori estimate for the solution norm when the solution exists.
\begin{theorem}\label{t2.3}
Let $u(x,t)$ be a classical solution of the problem (\ref{1.1})-(\ref{1.3}) and $f(x,t)$ belongs to the space $\mathcal{C}(\bar{\Omega})$ with the norm $M_f :=\|f\|_{\mathcal{C}(\bar{\Omega})}$. Then the following estimate of the solution norm holds true:
\begin{align}\label{2.5}
\|u\|_{\mathcal{C}(\bar{Q})}\le M_0+\frac{T^\alpha}{\Gamma(1+\alpha)}M_f,\quad M_0:=\|u_0\|_{\mathcal{C}(\bar{G})}.
\end{align}
\end{theorem}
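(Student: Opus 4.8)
The plan is to establish (\ref{2.5}) by a barrier (comparison) argument that feeds directly into the maximum and minimum principles of Lemmas \ref{l2.1} and \ref{l2.2}. The single idea driving everything is to introduce the spatially constant barrier
\[
w(t):=M_0+\frac{t^\alpha}{\Gamma(1+\alpha)}M_f,
\]
which is engineered so that its Caputo derivative precisely cancels the forcing bound. Using the elementary identity ${}_0^CD_t^{\alpha}t^\alpha=\Gamma(1+\alpha)$ one finds ${}_0^CD_t^{\alpha}w\equiv M_f$, while $\partial_x^2w\equiv0$ and $w_x\equiv0$ since $w$ does not depend on $x$. A quick check shows $w\in\mathcal{CW}^1$ (the only nonobvious requirement, $w'\in L(0,T)$, holds because $w'(t)\propto t^{\alpha-1}$ with $\alpha-1>-1$), so the shifted functions used below remain admissible for the lemmas.

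First I would prove the upper bound $u\le w$ by applying Lemma \ref{l2.1} to $v:=u-w$. Substituting into the operator gives ${}_0^CD_t^{\alpha}v-\partial_x^2v+cv=f-M_f-c\,w$, which is $\le0$ since $f\le M_f$ and $c\ge0$, $w\ge0$; thus the effective source for $v$ is nonpositive. For the boundary data, $w_x\equiv0$ kills the $\beta$-terms, so at $x=a$ the homogeneous condition (\ref{1.3}) gives $\sigma_0v(a,t)+\beta_0v_x(a,t)=-\sigma_0w(t)\le0$ (using $\sigma_0>0$, $w\ge0$), and identically at $x=b$ with $\sigma_1>0$. Finally $v(x,0)=u_0(x)-M_0\le0$ by the definition of $M_0$. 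Lemma \ref{l2.1} then forces $v\le0$ on $\bar\Omega$, i.e. $u(x,t)\le w(t)\le M_0+T^\alpha M_f/\Gamma(1+\alpha)$.

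The lower bound is obtained by the mirror computation: apply Lemma \ref{l2.2} to $\hat v:=u+w$. Now the effective source is $f+M_f+c\,w\ge0$, the boundary combinations equal $+\sigma_0w(t)\ge0$ and $+\sigma_1w(t)\ge0$, and $\hat v(x,0)=u_0(x)+M_0\ge0$; hence $\hat v\ge0$ and $u\ge-w$. Combining the two one-sided estimates yields $|u(x,t)|\le w(t)\le M_0+T^\alpha M_f/\Gamma(1+\alpha)$ for every $(x,t)\in\bar\Omega$, and taking the supremum gives (\ref{2.5}).

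There is no serious obstacle here once the lemmas are in hand; the proof is essentially determined by the choice of barrier. The one point deserving care is confirming that $w$ (and hence $v,\hat v$) genuinely lies in the regularity class required by the lemmas and that the identity ${}_0^CD_t^{\alpha}t^\alpha=\Gamma(1+\alpha)$ is applied correctly, together with scrupulous sign bookkeeping in the boundary terms — though the latter simplifies dramatically because $w_x\equiv0$ removes every $\beta_0,\beta_1$ contribution, leaving only the harmless signs $\sigma_0,\sigma_1>0$.
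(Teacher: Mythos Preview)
Your proposal is correct and follows essentially the same barrier/comparison argument as the paper. The only cosmetic difference is that you fold the constant $M_0$ into the barrier $w(t)$ so that the shifted function has nonpositive initial data (and Lemma~\ref{l2.1} yields $v\le 0$ directly), whereas the paper subtracts only the $t^\alpha$ term and reads off the bound $M_0$ from the initial data via the maximum principle; the two are equivalent up to an additive constant.
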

\begin{proof}
First, define the auxiliary function $w$ as
\begin{align*}
w(x,t):=u(x,t)-\frac{M_f}{\Gamma(1+\alpha)}t^\alpha,\quad (x,t)\in\bar{\Omega}.
\end{align*}
It is easy to verify that $w$ is the classical solution to the problem (\ref{1.1}) with the right-hand side function $\hat{f}(x,t):=f(x,t)-M_f-c(x)\frac{M_f}{\Gamma(1+\alpha)}t^\alpha$ and boundary conditions
\begin{align*}
&\sigma_0w(a,t)+\beta_0 w_x(a,t)=-\sigma_0\frac{M_f}{\Gamma(1+\alpha)}t^\alpha\le 0,\\
&\sigma_1w(b,t)+\beta_0 w_x(b,t)=-\sigma_1\frac{M_f}{\Gamma(1+\alpha)}t^\alpha\le 0.
\end{align*}
Since $c(x)\ge 0$, it follows that $\hat{f}(x,t)\le 0$ for $(x,t)\in \bar{\Omega}$. By applying Lemma \ref{l2.1}, we obtain
\begin{align*}
w(x,t)\le M_0,\quad (x,t)\in\bar{\Omega},\quad M_0:=\|u_0\|_{\mathcal{C}(\bar{G})}.
\end{align*}
Next, considering the function $u(x,t)$, it is straightforward to show that
\begin{align}\label{2.6}
u(x,t)=w(x,t)+\frac{M_f}{\Gamma(1+\alpha)}t^\alpha\le M_0+\frac{T^\alpha}{\Gamma(1+\alpha)}M_f, \quad (x,t)\in\bar{\Omega}.
\end{align}
Similarly, define another auxiliary function
\begin{align*}
\bar{w}(x,t):=u(x,t)+\frac{M_f}{\Gamma(1+\alpha)}t^\alpha,\quad (x,t)\in\bar{\Omega},
\end{align*}
and by performing the same procedure as above and using Lemma \ref{l2.2}, one obtains
\begin{align}\label{2.7}
u(x,t)\ge -M_0-\frac{T^\alpha}{\Gamma(1+\alpha)}M_f, \quad (x,t)\in\bar{\Omega}.
\end{align}
Combining (\ref{2.6}) and (\ref{2.7}), we immediately obtain the result (\ref{2.5}). The proof is completed.~\hfill $\square$
\end{proof}

Furthermore, the following theorem can be easily derived.
\begin{theorem}\label{t2.4}
The problem (\ref{1.1})-(\ref{1.3}) possesses at most one classical solution. This solution continuously depends on the data given in the problem in the sense that if $\|f-\bar{f}\|_{\mathcal{C}(\bar{\Omega})}\le \varepsilon$ and $\|u_0-\bar{u}_0\|_{\mathcal{C}(\bar{G})}\le \varepsilon_0$, then the estimate
\begin{align*}
\|u-\bar{u}\|_{\mathcal{C}(\bar{\Omega})}\le\varepsilon_0+\frac{T^\alpha}{\Gamma(1+\alpha)}\varepsilon,
\end{align*}
for the corresponding classical solutions $u$ and $\bar{u}$ holds true.
\end{theorem}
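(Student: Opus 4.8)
The plan is to reduce the entire statement to the a priori estimate already proved in Theorem \ref{t2.3}, exploiting the fact that problem (\ref{1.1})--(\ref{1.3}) is linear. First I would let $u$ and $\bar{u}$ denote the classical solutions corresponding to the data pairs $(f,u_0)$ and $(\bar{f},\bar{u}_0)$, respectively, and set $v:=u-\bar{u}$. Since the Caputo operator $_0^CD_t^{\alpha}$ in (\ref{1.4}) is linear in its argument, and since both the spatial operator $\partial_x^2-c(x)$ and the two Robin boundary operators in (\ref{1.3}) are linear and homogeneous, the difference $v$ is itself a classical solution of (\ref{1.1}) with right-hand side $f-\bar{f}$, initial datum $v(x,0)=u_0(x)-\bar{u}_0(x)$, and the \emph{same} homogeneous Robin boundary conditions (\ref{1.3}). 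This superposition step is the only structural point that must be verified, and it is immediate from linearity.

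Next I would apply Theorem \ref{t2.3} directly to $v$. Taking $M_f:=\|f-\bar{f}\|_{\mathcal{C}(\bar{\Omega})}$ and $M_0:=\|u_0-\bar{u}_0\|_{\mathcal{C}(\bar{G})}$, the estimate (\ref{2.5}) applied to $v$ yields
\[
\|u-\bar{u}\|_{\mathcal{C}(\bar{\Omega})}\le M_0+\frac{T^\alpha}{\Gamma(1+\alpha)}M_f.
\]
Invoking the hypotheses $M_0\le\varepsilon_0$ and $M_f\le\varepsilon$ then produces the claimed continuous-dependence bound $\|u-\bar{u}\|_{\mathcal{C}(\bar{\Omega})}\le\varepsilon_0+\frac{T^\alpha}{\Gamma(1+\alpha)}\varepsilon$.

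Finally, uniqueness comes out as the degenerate case $f=\bar{f}$ and $u_0=\bar{u}_0$, i.e. $\varepsilon=\varepsilon_0=0$: the estimate then forces $\|u-\bar{u}\|_{\mathcal{C}(\bar{\Omega})}\le 0$, so $u\equiv\bar{u}$ on $\bar{\Omega}$, giving at most one classical solution. I do not anticipate any genuine obstacle, as the whole argument is a linear reduction to Theorem \ref{t2.3}; the only point deserving a moment's care is confirming that $v$ remains in the admissible regularity class for a classical solution and continues to satisfy the homogeneous boundary conditions, both of which hold precisely because $u$ and $\bar{u}$ share identical homogeneous boundary operators.
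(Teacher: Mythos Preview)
Your proposal is correct and follows exactly the same approach as the paper: form the difference $v=u-\bar u$, observe by linearity that it is a classical solution of (\ref{1.1})--(\ref{1.3}) with data $f-\bar f$ and $u_0-\bar u_0$, and then apply Theorem~\ref{t2.3} to $v$; uniqueness is the special case $\varepsilon=\varepsilon_0=0$. The paper in fact omits the details entirely and only states this reduction, so your write-up is slightly more explicit but not substantively different.
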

\begin{proof}
The proof only requires applying Theorem \ref{t2.3} to the function $u-\bar{u}$, which is a classical solution of the problem (\ref{1.1})-(\ref{1.3}) with the functions $f-\bar{f}$ and $u_0-\bar{u}_0$. Therefore, the proof is omitted.~\hfill $\square$
\end{proof}

\begin{remark}
Assuming that both $u$ and $\bar{u}$ are solutions to the problem (\ref{1.1})-(\ref{1.3}) with the same right-hand side $f(x,t)$ and initial condition $u_0(x)$, we obtain $\varepsilon = 0$ and $\varepsilon_0 = 0$ from Theorem \ref{t2.4}. This leads to $\|u - \bar{u}\|_{\mathcal{C}(\bar{\Omega})} = 0$, which implies that $u = \bar{u}$. Hence, the uniqueness of the solution is proved for the problem (\ref{1.1})-(\ref{1.3}).
\end{remark}

Next, define $D(\mathcal{L})$ be the space of functions $g$ that satisfy the Robin boundary conditions (\ref{1.3}) and the inclusions $g\in \mathcal{C}(\bar{\Omega})\cap \mathcal{C}^2(G)$, $\mathcal{L}(g)\in L^2(G)$ is denoted, where $\mathcal{L}$ is defined in (\ref{2.1}). The main theorem of this section is then stated as follows.

\begin{theorem}\label{t2.5}
Assume that $u_0(x)\in D(\mathcal{L}),\ \mathcal{L}(u_0)\in D(\mathcal{L})$ and $f\in D(\mathcal{L})$ for any $t\in(0,T]$. Then solution of the problem (\ref{1.1})-(\ref{1.3}) exists, is unique and is given by (\ref{2.2}) for any $(x,t)\in [a,b]\times(0,T]$.
\end{theorem}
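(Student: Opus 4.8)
The plan is to separate uniqueness from existence. Uniqueness is already in hand: it follows directly from Theorem~\ref{t2.4} together with the uniqueness conclusion recorded in the Remark following Theorem~\ref{t2.4}, so the entire burden here is to show that the formal series (\ref{2.2}) genuinely defines a classical solution under the stated regularity of $u_0$ and $f$. Following \cite{luchko2012initial}, I would verify in turn that (i) the series (\ref{2.2}) converges uniformly on $\bar{\Omega}$, so that $u\in\mathcal{C}(\bar{\Omega})$; (ii) it may be differentiated term by term twice in $x$ and once in the Caputo sense in $t$; and (iii) the resulting series satisfies (\ref{1.1}), the initial condition (\ref{1.2}) and the boundary conditions (\ref{1.3}).

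The two analytic ingredients are bounds on the Mittag-Leffler factors and decay of the expansion coefficients. For the former I would use the standard estimates $0\le E_{\alpha,1}(-\lambda_i t^\alpha)\le 1$ and $0\le E_{\alpha,\alpha}(-\lambda_i t^\alpha)\le C/(1+\lambda_i t^\alpha)$ valid for $\lambda_i>0$, together with the identity $\int_0^t s^{\alpha-1}E_{\alpha,\alpha}(-\lambda_i s^\alpha)\,ds=\lambda_i^{-1}\bigl(1-E_{\alpha,1}(-\lambda_i t^\alpha)\bigr)$, which gives $|J_i(t)|\le\lambda_i^{-1}\max_{\tau\in[0,T]}|f_i(\tau)|$. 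For the coefficient decay I would exploit the self-adjointness of $\mathcal{L}$: since $\mathcal{L}\psi_i=\lambda_i\psi_i$ and the hypotheses $u_0,\mathcal{L}u_0,f(\cdot,t)\in D(\mathcal{L})$ all make the boundary terms vanish, applying self-adjointness twice yields $(u_0,\psi_i)=\lambda_i^{-2}(\mathcal{L}^2u_0,\psi_i)$ and once yields $f_i(t)=\lambda_i^{-1}(\mathcal{L}f(\cdot,t),\psi_i)$. Combined with $\|\psi_i\|_\infty\le C$, $\lambda_i\approx(i\pi)^2/(b-a)^2$, Bessel's inequality for $\mathcal{L}^2u_0,\mathcal{L}f\in L^2(G)$, and the Cauchy-Schwarz inequality, these produce absolutely convergent majorants for (\ref{2.2}); differentiating twice in $x$ costs one factor $\lambda_i$ (through $\psi_i''=c\psi_i-\lambda_i\psi_i$), which the $\lambda_i^{-2}$ decay still absorbs.

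The algebraic heart of step (iii) is that each mode $v_i(t):=(u_0,\psi_i)E_{\alpha,1}(-\lambda_i t^\alpha)+J_i(t)$ solves the scalar fractional ODE $_0^CD_t^{\alpha}v_i=-\lambda_i v_i+f_i$ with $v_i(0)=(u_0,\psi_i)$. Granting term-by-term differentiation, this gives $_0^CD_t^{\alpha}u=\sum_i(-\lambda_i v_i+f_i)\psi_i=\partial_x^2u-cu+f$, since $\partial_x^2u-cu=-\sum_i\lambda_i v_i\psi_i$. The initial condition follows from $E_{\alpha,1}(0)=1$ and $J_i(0)=0$, and the boundary conditions (\ref{1.3}) hold termwise because every $\psi_i$ satisfies them by (\ref{2.1}).

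The main obstacle is rigorously justifying the term-by-term Caputo differentiation, because the operator is nonlocal and $\partial_t v_i(t)=-\lambda_i(u_0,\psi_i)t^{\alpha-1}E_{\alpha,\alpha}(-\lambda_i t^\alpha)+J_i'(t)$ is singular like $t^{\alpha-1}$ as $t\to0^+$, so the differentiated series cannot converge uniformly up to $t=0$. I would resolve this by proving uniform convergence of $\sum_i\partial_t[v_i(t)]\psi_i(x)$ only on $[a,b]\times[\delta,T]$ for each $\delta>0$, while exhibiting a $t$-integrable, $x$-uniform dominating function near $t=0$; writing $_0^CD_t^{\alpha}$ as in (\ref{1.4}) and invoking dominated convergence then permits interchanging the time integral with the summation. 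This is precisely where the membership $u\in\mathcal{W}_t^1(0,T]$ built into $\mathcal{CW}^1$ enters. The remaining estimates are routine and parallel those in \cite{luchko2012initial}, so I would record only the decay bounds explicitly and refer to that work for the interchange-of-limits details.
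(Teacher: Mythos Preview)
Your proposal is correct and follows essentially the same route as the paper: the paper's own proof simply records that uniqueness was established earlier and that existence follows the argument of \cite[Theorem~4.1]{luchko2012initial} verbatim, since the only structural facts used there---positivity of the eigenvalues $\lambda_i$ and the uniform bound $\|\psi_i\|_\infty\le C$---carry over to the Robin problem and the proof is otherwise independent of the boundary conditions. In fact you supply considerably more detail than the paper does (the Mittag-Leffler bounds, the self-adjointness trick for coefficient decay, and the handling of the $t^{\alpha-1}$ singularity in the term-by-term Caputo differentiation), all of which matches the machinery in \cite{luchko2012initial} that the paper invokes by reference.
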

\begin{proof}
The uniqueness of the solution has been established in the previous derivation, while the proof of existence follows a similar approach to that in \cite[Theorem 4.1]{luchko2012initial}, since $\lambda_i>0$ still holds and the proof does not depend on the boundary conditions.~\hfill $\square$
\end{proof}
\vskip 0.6cm

{\section{Space semi-discretization}\label{s3}}
\setcounter{subsection}{0} \setcounter{equation}{0}

In this section, we apply the spectral collocation method for spatial discretization. It is well known that spectral collocation method has become a widely used and effective approach for solving partial differential equations in recent years. The main idea of the method is to represent the solution as sum of truncated series by using suitable basis functions (see e.g., \cite{khater2008chebyshev,dehghan2011spectral,thirumalai2018spectral}). The chosen basis functions should be adapted to the problem itself, easy to calculate, and should converge quickly when the truncation parameter is large, and the numerical solution should have high accuracy with respect to the spatial variable. Therefore, for the problem (\ref{1.1})-(\ref{1.3}), we adopt the "Robin-Modified Chebyshev polynomials of the first kind" introduced in \cite{ahmed2023highly} as basis functions, since they are specifically constructed to satisfy the Robin boundary conditions (\ref{1.3}).
\vskip 0.5cm

{\subsection{Robin-Modified Chebyshev polynomials of first-kind}\label{s3.1}}
Initially, we will introduce the basis function considered, namely the "Robin-Modified Chebyshev polynomials of the first kind" (denoted as \textbf{RMCP1}), along with its related properties. The RMCP1, which originate from \cite{ahmed2023highly}, are defined as follows:
\begin{align}\label{3.1}
\phi_k(x)=q_k(x)T^*_k(x;a,b),\quad k=0,1,2,\cdots,
\end{align}
where $T^*_k(x;a,b)$ is defined as
\begin{align*}
T^*_k(x;a,b)=\cos\left(k \arccos\left(\frac{2x-a-b}{b-a}\right)\right),
\end{align*}
which represents the shifted orthogonality Chebyshev polynomials of the first kind, orthogonal over the interval $[a,b]$ (see, e.g., \cite{mason2002chebyshev,shen2011spectral}). Furthermore, the function $q_k(x)$ in (\ref{3.1}) has the form
\begin{align*}
q_k(x)=x^2+A_kx+B_k,
\end{align*}
in which the coefficients $A_k$ and $B_k$ are unique constants defined by
\begin{align*}
\begin{aligned}
A_k=&\frac{1}{v}(2\beta_0(\sigma_1L(a+k^2\hat{r})+2\beta_1(k^2+1)k^2\hat{r})-\sigma_0L(2\beta_1(b+k^2\hat{r})+\sigma_1L\hat{r})),\\ B_k=&\frac{1}{v}(\beta_1(a\sigma_0L(-a+2b(k^2+1))+2\beta_0(k^2+1)(L^2-2abk^2))\\
&+\sigma_1bL(\beta_0(b-2a(k^2+1))+a\sigma_0L),
\end{aligned}
\end{align*}
with the coefficients $\sigma_0,\sigma_1,\beta_0$ and $\beta_1$ correspond to the Robin boundary conditions, and $L=b-a$, $\hat{r}=b+a$ and
\begin{align*}
v=\sigma_0\sigma_1L^2-4\beta_0\beta_1(k^2+1)k^2-(2k^2+1)L(\sigma_1\beta_0-\sigma_0\beta_1)\ne 0.
\end{align*}
It must be emphasized that the basis function $\phi_k(x)$ in (\ref{3.1}) satisfies the Robin boundary conditions (\ref{1.3}). Furthermore, according to Leibniz's rule, the proposed RMCP1 will satisfy
\begin{align*}
&\phi_k(0)=B_kT^*_k(0;a,b),\\
&\phi_k^{(1)}(0)=B_kT^{*(1)}_k(0;a,b)+A_kT^*_k(0;a,b),
\end{align*}
and for $2\le q\le k+2$, we have
\begin{align*}
\phi_k^{(q)}(0)=B_kT^{*(q)}_k(0;a,b)+qA_kT^{*(q-1)}_k(0;a,b)+q(q-1)T^{*(q-2)}_k(0;a,b).
\end{align*}
From \cite[Lemma 2.1]{ahmed2023highly}, the above $T^{*(q)}_k(0;a,b)$ with $1\le q\le k+2$ is defined by
\begin{align*}
T^{*(q)}_k(0;a,b)=\frac{k(-1)^{k-q}q!(k+q-1)!(\frac{4}{b-a})^q}{(2q)!(k-q)!}{}_2F_1 \left( \left. \begin{array}{c}
q - k,\ k + q \\
q+\frac{1}{2}
\end{array} \right| \frac{a}{a - b} \right),
\end{align*}
where ${}_2F_1$ is the generalized hypergeometric function defined in \cite{koekoek2010hypergeometric}.

In the following, we will state an important theorem which show the first derivative of $\phi_n(x)$ in terms of these polynomials themselves. This result can be found in \cite{ahmed2023highly}.
\begin{theorem}\label{t3.1}
The first derivative of $\phi_n(x)$ for all $n\ge0$ can be written in the form:
\begin{align}\label{3.2}
D\phi_n(x)=\sum_{j=0}^{n-1}a_j(n)\phi_j(x)+\epsilon_n(x),\quad \epsilon_n(x)=e_1(n)x+e_0(n),
\end{align}
where the expansion coefficients $a_0(n),a_1(n),\cdots,a_{n-1}(n)$ are the solution to the system
\begin{align*}
G_na_n=b_n,
\end{align*}
with $a_n=[a_0(n),a_1(n),\cdots,a_{n-1}(n)]^T,\ G_n=(g_{ij}(n))_{0\le i,j\le n-1}$, and $b_n=[b_0(n),$ $b_1(n),\cdots,b_{n-1}(n)]^T$. The elements of $G_n$ and $b_n$ are defined as follows:
\begin{align*}
g_{ij}(n)=\left\{\begin{matrix}
 \phi_{j-1}^{(i+1)}(0), & j\ge i\\
  0,& \text{otherwise}
\end{matrix}\right.,\qquad  b_i(n)=\phi_n^{(i+3)}(0).
\end{align*}
And the two coefficients $e_0(n)$ and $e_1(n)$ in (\ref{3.2}) are given by
\begin{align*}
e_0(n)=\phi_n^{(1)}(0)-\sum_{j=0}^{n-1}a_j(n)\phi_j(0),\quad e_1(n)=\phi_n^{(2)}(0)-\sum_{j=0}^{n-1}a_j(n)\phi^{(1)}_j(0).
\end{align*}
\end{theorem}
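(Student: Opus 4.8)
The plan is to exploit that $\phi_n$ is a genuine polynomial of known degree and that the $\phi_j$, together with the monomials $1$ and $x$, form a basis of a polynomial space. Since $q_k$ is monic of degree two and $T^*_k(\cdot;a,b)$ has degree $k$, each $\phi_k=q_kT^*_k$ is a polynomial of exact degree $k+2$; hence $D\phi_n$ has degree $n+1$. The functions $\phi_0,\phi_1,\dots,\phi_{n-1}$ have the distinct degrees $2,3,\dots,n+1$, so adjoining the degree-$1$ and degree-$0$ monomials $x$ and $1$ yields $n+2$ polynomials of pairwise distinct degrees $0,1,\dots,n+1$, i.e.\ a basis of the space of polynomials of degree at most $n+1$. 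Consequently $D\phi_n$ admits one and only one representation of the form (\ref{3.2}); this already settles existence and uniqueness, and it remains only to identify the coefficients.

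To pin down the $a_j(n)$, I would use that $\epsilon_n(x)=e_1(n)x+e_0(n)$ is affine and hence annihilated by every derivative of order $\ge 2$. Writing $R:=D\phi_n-\sum_{j=0}^{n-1}a_j(n)\phi_j$, the requirement $\epsilon_n=R$ is, since $\deg R\le n+1$, equivalent to $R^{(q)}(0)=0$ for $q=2,3,\dots,n+1$ (these $n$ conditions force the degree-$\le n-1$ polynomial $R''$ to vanish identically via its Taylor expansion at $0$). Evaluating $D^{q}$ of (\ref{3.2}) at $x=0$ thus eliminates $\epsilon_n$ and yields the $n\times n$ linear system
\begin{align*}
\sum_{j=0}^{n-1}a_j(n)\,\phi_j^{(q)}(0)=\phi_n^{(q+1)}(0),\qquad q=2,3,\dots,n+1,
\end{align*}
whose entries are exactly the values $\phi_k^{(q)}(0)$ supplied by the Leibniz formulas recorded just before the statement; upon setting $q=i+2$ this is the system $G_na_n=b_n$. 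Once $a_n$ is known, evaluating (\ref{3.2}) and its first derivative at $x=0$ returns $e_0(n)=\phi_n^{(1)}(0)-\sum_{j}a_j(n)\phi_j(0)$ and $e_1(n)=\phi_n^{(2)}(0)-\sum_{j}a_j(n)\phi_j^{(1)}(0)$, matching the stated formulas.

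The crux is the solvability of $G_na_n=b_n$, for which I would invoke the degree bookkeeping once more: because $\deg\phi_j=j+2$, one has $\phi_j^{(q)}(0)=0$ whenever $q>j+2$, so the only nonzero entries occur for $j\ge q-2$ (equivalently $j\ge i$), making $G_n$ triangular. Its diagonal entries are $\phi_j^{(j+2)}(0)=(j+2)!\,\kappa_j$, where $\kappa_j\ne 0$ is the leading coefficient of $\phi_j$ (equal to that of $T^*_j$); hence $G_n$ has a nonvanishing diagonal and is invertible, giving the unique $a_n=G_n^{-1}b_n$. I expect the main obstacle to be computational rather than conceptual, namely carrying the Leibniz expansion of $\phi_k=q_kT^*_k$ through to the explicit closed forms for $\phi_k^{(q)}(0)$ in terms of the hypergeometric values $T^{*(q)}_k(0;a,b)$, and tracking the index ranges (in particular the low orders $q=0,1$ where the lower-degree terms of $q_k$ contribute). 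Since the identity is quoted from \cite{ahmed2023highly}, one could alternatively just cite it there; the argument above reconstructs it directly.
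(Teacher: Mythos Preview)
The paper does not prove Theorem~\ref{t3.1}; it merely states the result and refers the reader to \cite{ahmed2023highly}. Your proposal therefore cannot be compared against a proof in the paper, but it is a correct and self-contained reconstruction: the degree count $\deg\phi_k=k+2$ gives the basis $\{1,x,\phi_0,\dots,\phi_{n-1}\}$ of $\Pi_{n+1}$, whence existence and uniqueness of the representation~(\ref{3.2}); killing the affine remainder by evaluating derivatives of order $\ge 2$ at $x=0$ yields exactly $n$ linear equations in the $n$ unknowns $a_j(n)$; and the triangularity argument (nonzero diagonal $(j+2)!\,\kappa_j$) cleanly establishes invertibility of $G_n$. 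Your final remark is apt: since the paper only quotes the result, citing \cite{ahmed2023highly} would suffice, and your argument supplies what the paper omits.

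One minor caution: the system you derive has matrix entries $\phi_j^{(i+2)}(0)$ (with $0\le i,j\le n-1$), whereas the statement as printed gives $g_{ij}=\phi_{j-1}^{(i+1)}(0)$, which for $j=0$ would involve an undefined $\phi_{-1}$. This is almost certainly an indexing slip carried over from the source; your version is the internally consistent one and matches the stated right-hand side $b_i(n)=\phi_n^{(i+3)}(0)$. You may wish to flag this discrepancy rather than silently correct it.
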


By defining the vector
\begin{align}\label{3.3}
\Phi(x)=[\phi_0(x),\phi_1(x),\cdots,\phi_N(x)]^T,
\end{align}
it is straightforward to obtain the operational matrix of derivatives of $\Phi(x)$ from Theorem \ref{t3.1}. As a result, the following important finding, presented in \cite{ahmed2023highly}, is obtained.

\begin{corollary}\label{c3.2}
The mth derivative of the vector $\Phi(x)$ has the form:
\begin{align}\label{3.4}
\frac{d^m \Phi(x)}{dx^m}=H^m\Phi(x)+\eta^{(m)}(x),
\end{align}
where
\begin{align}\label{3.5}
\eta^{(m)}(x)=\sum_{k=0}^{m-1}H^k\epsilon^{(m-k-1)}(x),\quad \epsilon(x)=[\epsilon_0(x),\epsilon_1(x),\cdots,\epsilon_N(x)]^T,
\end{align}
and $H=(h_{ij})_{0\le i,j\le N}$ with
\begin{align}\label{3.6}
h_{ij}=\left\{\begin{matrix}
 a_j(i), & \quad i>j,\\
  0,&\quad \text{otherwise}.
\end{matrix}\right.
\end{align}
\end{corollary}
\vskip 0.5cm

{\subsection{Robin shifted Chebyshev first-kind collocation method}\label{s3.2}}

In this subsection, with the aim of using the spectral collocation method \cite{canuto2006spectral} to generate the semi-discretized system, we consider the collocation method based on the RMCP1 (\ref{3.1}) to obtain numerical solutions for the problem (\ref{1.1}) on the interval $[a,b]$. The unknown exact solution $u(x,t)$ is hence approximated by a finite series expansion
\begin{align}\label{3.7}
u_N(x,t)=\sum_{k=0}^{N}y_k(t)\phi_k(x)\approx u(x,t),
\end{align}
where $y_k (t)$ is the unknown time-dependent function for $k=0,1,\cdots,N$. For sake of brevity, by defining the vector of unknowns as $y(t)=[y_0(t),y_1(t),\cdots,y_N(t)]^T$ and combining (\ref{3.3}), the equation (\ref{3.7}) can then be rewritten in vector form as follows:
\begin{align}\label{3.8}
u_N(x,t)=\Phi^T(x) y(t).
\end{align}
Based on the facts of Corollary \ref{c3.2}, it is a simple matter to
\begin{align}\label{3.9}
\frac{d^2 \Phi(x)}{dx^2}=H^2\Phi(x)+\eta^{(2)}(x),
\end{align}
in which $\eta^{(2)}(x)$ and $H$ are given in (\ref{3.5}) and (\ref{3.6}), respectively. Then, by using the formula (\ref{3.9}), it is therefore possible to represent the spatial second-order partial derivative of the approximated solution $u_N(x,t)$ as follows
\begin{align}\label{3.10}
\notag\frac{\partial^2 u_N}{\partial x^2}&=(\Phi^T(x))^{(2)}y(t)=[H^2\Phi(x)+\eta^{(2)}(x)]^Ty(t)\\
&=\Phi^T(x)(H^2)^Ty(t)+\eta^{(2)}(x)^Ty(t).
\end{align}
Similarly, the time-fractional derivative of $u_N(x,t)$ can be obtained, that is
\begin{align}\label{3.11}
_0^CD_t^{\alpha } u_N(x,t)={\Phi^T(x)} _0^CD_t^{\alpha }y(t).
\end{align}
If we substitute (\ref{3.8}), (\ref{3.10}) and (\ref{3.11}) into the equation (\ref{1.1}), the residual function of (\ref{1.1}) will be
\begin{align}\label{3.12}
\notag R_N(x,t):=&{\Phi^T(x)} _0^CD_t^{\alpha }y(t)-\left[\Phi^T(x)(H^2)^Ty(t)\right.\\
&\left.+\eta^{(2)}(x)^Ty(t)\right]+c(x)\Phi^T(x) y(t)-f(x,t).
\end{align}
The essence of the present scheme is that we obtain its residual function and force it to go to zero at certain sets of collocation points. We choose the collocation points as the $(N+1)$ zeros of $T^*_{N+1}(x)$, which are given by
\begin{align}\label{3.13}
x_k=\frac{1}{2}\left((a+b)+(b-a)\cos\left(\frac{2k+1}{2(N+1)}\pi\right)\right),\quad k=0,1,\cdots,N.
\end{align}
Then, the residual function (\ref{3.12}) must be exact at $x$ equal to $x_k$ defined in (\ref{3.13}), that is
\begin{align*}
R_N(x_k,t)=0,\quad k=0,1,2,\cdots,N.
\end{align*}
Consequently, the semi-discretization leads to the following system of FDEs,
\begin{align}\label{3.14}
\left\{\begin{aligned}
{\Psi} _0^CD_t^{\alpha }y(t)&=\left[\Psi(H^2)^T +\Lambda -C_N\Psi\right] y(t)+F(t),\\
{\Psi} y(0)&=U_0,
\end{aligned}\right.
\end{align}
where $\Psi$, $\Lambda$ and $C_N$ are all $(N+1)\times(N+1)$ dimensional matrixs, defined as
\begin{align*}
\Psi=\left[\Phi(x_0),\Phi(x_1),\cdots,\Phi(x_{N})\right]^T&,\ \Lambda=\left[\eta^{(2)}(x_0),\eta^{(2)}(x_1),\cdots,\eta^{(2)}(x_{N})\right]^T,\\
C_N={\rm{diag}}&(c(x_0),c(x_1),\cdots,c(x_{N})),
\end{align*}
respectively, and $F(t)$ and $U_0$ are the $(N+1)$-dimensional column vectors given by
\begin{align*}
F(t)=\left[f(x_0,t),f(x_2,t),\cdots,f(x_{N},t)\right]^T,\ U_0=\left[u_0(x_0),u_0(x_1),\cdots,u_0(x_{N})\right]^T.
\end{align*}

It must be emphasized that the approximate solution $u_N(x,t)$ possesses the following property.
\begin{theorem}\label{t3.3}
The approximate solution $u_N(x,t)$ naturally satisfies Robin boundary conditions (\ref{1.3}).
\end{theorem}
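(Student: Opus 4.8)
The plan is to exploit the linearity of the two Robin boundary operators together with the defining property of the RMCP1 basis, namely that each $\phi_k(x)$ in (\ref{3.1}) was constructed precisely so as to satisfy the homogeneous Robin conditions (\ref{1.3}). So I would first record this basis property explicitly as the pair of identities
\begin{align*}
\sigma_0\phi_k(a)+\beta_0\phi_k'(a)=0,\qquad \sigma_1\phi_k(b)+\beta_1\phi_k'(b)=0,\quad k=0,1,\cdots,N,
\end{align*}
which are guaranteed by the construction in \cite{ahmed2023highly}: the coefficients $A_k$ and $B_k$ of the factor $q_k(x)$ are chosen as the unique constants that enforce exactly these two conditions.

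Next I would substitute the series representation (\ref{3.7}) into the left-hand sides of the boundary conditions (\ref{1.3}). Since differentiation in $x$ commutes with the finite sum and the coefficients $y_k(t)$ do not depend on $x$, evaluating at the left endpoint gives
\begin{align*}
\sigma_0 u_N(a,t)+\beta_0\frac{\partial u_N}{\partial x}(a,t)
=\sum_{k=0}^{N}y_k(t)\left[\sigma_0\phi_k(a)+\beta_0\phi_k'(a)\right]=0,
\end{align*}
where the bracketed term vanishes termwise by the basis property above. The computation at the right endpoint $x=b$ is identical, with $(\sigma_1,\beta_1)$ replacing $(\sigma_0,\beta_0)$, and yields $\sigma_1 u_N(b,t)+\beta_1\,\partial_x u_N(b,t)=0$. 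This establishes the claim.

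I do not expect a substantive obstacle: once the basis property is in hand the result is pure linearity, and crucially it holds for \emph{every} choice of the coefficient functions $y_k(t)$ — hence in particular for the specific $y_k(t)$ obtained by solving the semi-discrete system (\ref{3.14}). The only point meriting any care is to confirm that the termwise cancellation is legitimate, which it trivially is because the sum has only $N+1$ terms, so no convergence or interchange-of-limit questions arise. I would close by noting that this is exactly the structural advantage of the RMCP1 basis: the Robin conditions are satisfied automatically by construction for any $t$, rather than being imposed as additional algebraic constraints coupled into the collocation equations.
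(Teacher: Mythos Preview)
Your proposal is correct and follows essentially the same approach as the paper: both arguments substitute the finite expansion of $u_N$ into the Robin boundary operators and invoke the defining property of the RMCP1 basis together with linearity, the only cosmetic difference being that the paper works with the vector form $\Phi^T(x)y(t)$ while you write out the sum (\ref{3.7}) termwise.
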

\begin{proof}
The proof is straightforward. We simply need to substitute the approximate solution $u_N(x,t)$ from (\ref{3.8}) into the Robin boundary conditions (\ref{1.3}), and thus get
\begin{align*}
\sigma_0u_N(a,t)+\beta_0(u_N)_x(a,t)&=\sigma_0\Phi^T(a) y(t)+\beta_0(\Phi^T(a))^{(1)}y(t)\\
&=\left[\sigma_0\Phi^T(a)+\beta_0(\Phi^T(a))^{(1)}\right] y(t)=0,
\end{align*}
where the last equality follows from the fact that the basis function $\phi(x)$ satisfies the Robin boundary conditions (\ref{1.3}), that is
\begin{align*}
\sigma_0\Phi^T(a)+\beta_0(\Phi^T(a))^{(1)}=0.
\end{align*}
By using the same approach and the condition
\begin{align*}
\sigma_1\Phi^T(b)+\beta_1(\Phi^T(b))^{(1)}=0,
\end{align*}
it is straightforward to conclude that
\begin{align*}
\sigma_1u_N(b,t)+\beta_1(u_N)_x(b,t)=0.
\end{align*}
Thus, the proof is completed.~\hfill $\square$
\end{proof}

\begin{remark}
For each $t\in (0,T]$, and under the assumption that $\partial_x^p u(\cdot,t)\in \mathcal{C}([a,b])$ for $p=0,1,2,\dots,N$, the convergence properties of RMCP1 have been established in \cite[Theorem 6.1]{ahmed2023highly}. This result shows that the approximate solution $u_N(x,t)$ converge exponentially to the exact solution $u(x,t)$ as $N$ increases, when the spectral collocation method is applied using the basis functions defined in (\ref{3.1}) and the collocation points given in (\ref{3.13}).
\end{remark}
\vskip 0.6cm

{\section{Temporal discretization}\label{s4}}
\setcounter{subsection}{0} \setcounter{equation}{0}

In this section, we analyze the semi-discrete FDEs (\ref{3.14}) by using FHBVMs introduced in \cite{brugnano2024spectrally,brugnano2024numerical,brugnano2025analysis}. For simplicity of expression and effective application of FHBVMs, we define that
\begin{align*}
g(t,y(t))=\left[(H^2)^T+{\Psi}^{-1}\Lambda-{\Psi}^{-1}C_N\Psi\right] y(t)+{\Psi}^{-1}F(t),
\end{align*}
then the semi-discrete FDEs (\ref{3.14}) can be expressed in the following general form
\begin{align}\label{4.1}
_0^CD_t^{\alpha }y(t)=g(t,y(t)),\quad  t\in(0,T],
\end{align}
with $\alpha\in(0,1)$ and the corresponding initial condition
\begin{align}\label{4.2}
y_0:=y(0)=\Psi^{-1}U_0\in \mathbb{R}^{N+1}.
\end{align}
Here, $_0^CD_t^{\alpha }y(t)$ is defined in (\ref{1.4}) and the Riemann-Liouville intergral associated to $_0^CD_t^{\alpha }y(t)$ is given by
\begin{align}\label{4.3}
I^{\alpha }y(t)=\frac{1}{\Gamma(\alpha)}\int_0^t(t-\tau)^{\alpha-1}g(\tau,y(\tau))d\tau, \quad t\in [0,T].
\end{align}
Consequently, the solution of (\ref{4.1}) can be formally written as
\begin{align}\label{4.4}
y(t)=y_0+I^{\alpha }y(t)=y_0+\frac{1}{\Gamma(\alpha)}\int_0^t(t-\tau)^{\alpha-1}g(\tau,y(\tau))d\tau.
\end{align}

We are now in a position to briefly introduce the background information on the application of FHBVMs to the problem (\ref{4.1})-(\ref{4.2}), as detailed in \cite{brugnano2024spectrally,brugnano2024numerical,brugnano2025analysis}. The basic idea is that of deriving a local polynomial approximation of the vector field $g(t,y(t))$ in (\ref{4.1}) by expanding it along the orthonormal Jacobi polynomial basis $P_j(x)\in \Pi_j$ (see \cite[(6)]{brugnano2024spectrally}), such that (for $i,j=0,1,\cdots$)
\begin{align}\label{4.5}
P_0(x)=1,\quad \int_0^1\omega(x)P_i(x)P_j(x)dx=\delta_{ij},\quad \omega(x)=\alpha(1-x)^{\alpha-1},
\end{align}
on the interval $[0,1]$, where $\Pi_j$ is the set of polynomials of degree $j$, $\delta_{ij}$ is the Kronecker-delta symbol and $\omega(x)$ is the weighting function such that $\int_0^1\omega(x)dx=1$. For the time mesh, to cope with the possible nonsmoothness of the vector field at the initial point and avoid oscillation in subsequent solutions, we consider using a double mesh (also known as mixed mesh). This involves possible initial graded mesh
\begin{align}\label{4.6}
\begin{split}
\hat{t}_0=0,&\quad \hat{t}_i=\hat{t}_{i-1}+h_i=\hat{t}_{i-1}+r^{i-1}h_1\quad i=1,2\cdots,v,\\ &\sum_{i=1}^{v}h_i=h_1\sum_{i=1}^vr^{i-1}=h_1\frac{r^v-1}{r-1}=mh,
\end{split}
\end{align}
only in the interval $[0,mh]$, and a uniform mesh
\begin{align}\label{4.7}
t_j=jh=j\frac{T}{M},\quad j=m,m+1,\cdots,M,
\end{align}
with the timestep $h$ in the remaining interval $[mh,T]$ for $m\in\{1,2,\cdots,M\}$. It is apparent that, when $v>1$ and $m=M$, the mesh reduces to a purely graded mesh, whereas when $v=m=1$, it becomes a purely uniform mesh. For the graded mesh (\ref{4.6}), the parameter $r$ is determined by (refer to \cite{brugnano2025analysis}):
\begin{align*}
r=\left\{\begin{matrix}
 2 & \text{if}\ m=1,\\
 \frac{m}{m-1} & \text{if}\ m\ge 2,
\end{matrix}\right.
\end{align*}
and the initial timestep $h_1$ can be computed form the last equation of (\ref{4.6}), which gives
\begin{align*}
h_1=\frac{mh(r-1)}{r^v-1}.
\end{align*}
From this, we can achieve the goal of reducing $h_1$ by increasing the value of $v$.

In the following, we describe the derivation and implementation of FHBVMs on the mixed mesh defined by (\ref{4.6}) and (\ref{4.7}). First, let us denote the exact solution of (\ref{4.1}) to the sub-intervals $[\hat{t}_{i-1},\hat{t}_{i}]$ and $[t_{j-1},t_{j}]$ as
\begin{align*}
&\hat{y}_i(ch_i)\equiv y(\hat{t}_{i-1}+ch_i),\ i=1,2,\cdots,v,\\
&y_j(ch)\equiv y(t_{j-1}+ch),\ j=m+1,m+2\cdots,M,
\end{align*}
with $c\in[0,1]$, respectively. Then, by using (\ref{4.4}) and following similar steps as outlined in \cite{brugnano2024numerical,brugnano2025analysis}, for $i=1,2,\cdots,v$ we obtain
\begin{align}\label{4.8}
\notag \hat{y}_i(ch_i)=&y_0+\frac{1}{\Gamma(\alpha)}\sum_{\iota=1}^{i-1} h_{\iota}^\alpha\int_{0}^{1}\left(\frac{r^{i-\iota}-1}{r-1}+cr^{i-\iota}-\tau\right)^{\alpha-1}g(\tau h_\iota,\hat{y}_\iota(\tau h_\iota))d\tau\\
&+\frac{h_{i}^\alpha}{\Gamma(\alpha)}\int_{0}^{c}(c-\tau)^{\alpha-1}g(\tau h_{i},\hat{y}_{i}(\tau h_{i}))d\tau,
\end{align}
and for $j=m,m+1,\cdots,M$, it follows that
\begin{scriptsize}
\begin{align}\label{4.9}
\notag y_j(ch)=& y_0+\frac{1}{\Gamma(\alpha)}\sum_{\iota=1}^{v} h_{\iota}^\alpha\int_{0}^{1}\left(\frac{(j-1+c)(r^{v}-1)-m(r^{\iota-1}-1)}{mr^{\iota-1}(r-1)}-\tau\right)^{\alpha-1}g(\tau h_\iota,\hat{y}_\iota(\tau h_\iota))d\tau\\
\notag&+\frac{h^\alpha}{\Gamma(\alpha)}\sum_{\bar{\iota}=m+1}^{j-1}\int_0^1(j-\bar{\iota}+c-\tau)^{\alpha-1}g(\tau h,y_{\bar{\iota}}(\tau h))d\tau\\
&+\frac{h^\alpha}{\Gamma(\alpha)}\int_{0}^{c}(c-\tau)^{\alpha-1}g(\tau h,y_{j}(\tau h))d\tau
\end{align}
\end{scriptsize}
with $c\in[0,1]$. Next, by considering the local problem of FDEs (\ref{4.1}) and expanding the local vector fields along the orthonormal Jacobi polynomial basis (\ref{4.5}), for $c\in[0,1]$ one obtains
\begin{equation}\label{4.10}
\begin{split}
&\hat{y}^{(\alpha)}_i(ch_i)=g(ch_i,\hat{y}_i(ch_i))=\sum_{\mu\ge 0}P_{\mu}(c)\gamma_{\mu}^i(\hat{y}_i),\ i=1,2,\cdots,v,\ \hat{y}_1(0)=y_0,\\
&y^{(\alpha)}_j(ch)=g(ch,y_j(ch))=\sum_{\mu\ge 0}P_{\mu}(c)\gamma_{\mu}(y_j),\ j=m+1,m+2,\cdots,M.
\end{split}
\end{equation}
Then, the piecewise polynomial solutions $\hat{\sigma}(t),\sigma(t)\in \Pi_s$, which approximate the exact solutions of the problem (\ref{4.1}), can be obtained by truncating the vector fields in (\ref{4.10}) to finite sums with $s$ terms, that is
\begin{equation}\label{4.11}
\begin{split}
&\hat{\sigma}^{(\alpha)}_i(ch_i)=\sum_{\mu=0}^{s-1}P_{\mu}(c)\gamma_{\mu}^i(\hat{\sigma}_i),\quad i=1,2,\cdots,v,\quad \hat{\sigma}_1(0)=y_0,\\
&\sigma^{(\alpha)}_j(ch)=\sum_{\mu=0}^{s-1}P_{\mu}(c)\gamma_{\mu}(\sigma_j),\quad j=m+1,m+2,\cdots,M,\quad c\in[0,1],
\end{split}
\end{equation}
where the Fourier coefficients $\gamma_{\mu}^i(z)$ and $\gamma_{\mu}(z)$ with any suitable given function $z$ are defined by
\begin{equation}\label{4.12}
\begin{split}
&\gamma_{\mu}^i(z)=\int_0^1\omega(c)P_{\mu}(c)f(ch_i,z(ch_i))dc,\\
&\gamma_{\mu}(z)=\int_0^1\omega(c)P_{\mu}(c)f(ch,z(ch))dc,\ \mu=0,1,\cdots.
\end{split}
\end{equation}
By formally replacing the vector fields $g$ in (\ref{4.8}) and (\ref{4.9}) with the truncated ones in (\ref{4.11}), it is easy to derive the corresponding expression of the approximate solutions when $c\in[0,1]$,
\begin{small}
\begin{subequations}
\begin{align}
\notag \hat{\sigma}_i(ch_i)=&y_0+\frac{1}{\Gamma(\alpha)}\sum_{\iota=1}^{i-1} h_{\iota}^\alpha\int_{0}^{1}\left(\frac{r^{i-\iota}-1}{r-1}+cr^{i-\iota}-\tau\right)^{\alpha-1}\sum_{\mu=0}^{s-1}P_{\mu}(\tau)\gamma_{\mu}^\iota(\hat{\sigma}_\iota)d\tau\\
\notag&+\frac{h_{i}^\alpha}{\Gamma(\alpha)}\int_{0}^{c}(c-\tau)^{\alpha-1}\sum_{\mu=0}^{s-1}P_{\mu}(\tau)\gamma_{\mu}^i(\hat{\sigma}_i))d\tau\\
=:&\hat{\varphi }_i(c)+h_i^\alpha\sum_{\mu=0}^{s-1}I^\alpha P_\mu(c)\gamma_{\mu}^i(\hat{\sigma}_i),\ i=1,2,\cdots,v,\label{4.13a}\\
\notag\sigma_j(ch)=&y_0+\frac{1}{\Gamma(\alpha)}\sum_{\iota=1}^{v} h_{\iota}^\alpha\int_{0}^{1}\left(\frac{(j-1+c)(r^{v}-1)-m(r^{\iota-1}-1)}{mr^{\iota-1}(r-1)}-\tau\right)^{\alpha-1}\\
\notag&\sum_{\mu=0}^{s-1}P_{\mu}(\tau)\gamma_{\mu}^\iota(\hat{\sigma}_\iota)d\tau
+\frac{h^\alpha}{\Gamma(\alpha)}\sum_{\bar{\iota}=m+1}^{j-1}\int_0^1(j-\bar{\iota}+c-\tau)^{\alpha-1}\sum_{\mu=0}^{s-1}P_{\mu}(\tau)\gamma_{\mu}({\sigma}_{\bar{\iota}})d\tau\\
\notag&+\frac{h^\alpha}{\Gamma(\alpha)}\int_{0}^{c}(c-\tau)^{\alpha-1}\sum_{\mu=0}^{s-1}P_{\mu}(c)\gamma_{\mu}(\sigma_j)d\tau\\
=:&\varphi_j(c)+h^\alpha\sum_{\mu=0}^{s-1}I^\alpha P_\mu(c)\gamma_\mu(\sigma_j),\ j=m+1,m+2,\cdots,M,\label{4.13b}
\end{align}
\end{subequations}
\end{small}
where $\hat{\varphi }_i(c)$ and $\varphi_j(c)$ represent the memory terms as follows
\begin{align*}
\hat{\varphi }_i(c)=&y_0+\sum_{\iota=1}^{i-1}h_\iota^\alpha\sum_{\mu=0}^{s-1}J_\mu^\alpha\left(\frac{r^{i-\iota}-1}{r-1}+cr^{i-\iota}\right)\gamma_\mu^\iota(\hat{\sigma}_\iota),\\
\varphi_j(c)=&y_0+\sum_{\iota=1}^{v} h_{\iota}^\alpha\sum_{\mu=0}^{s-1}J_\mu^\alpha\left(\frac{(j-1+c)(r^{v}-1)-m(r^{\iota-1}-1)}{mr^{\iota-1}(r-1)}\right)\gamma_\mu^\iota(\hat{\sigma}_\iota)\\
&+h^\alpha\sum_{\bar{\iota}=m+1}^{j-1}\sum_{\mu=0}^{s-1}J_\mu^\alpha(j-\bar{\iota}+c)\gamma_\mu(\sigma_{\bar{\iota}})
\end{align*}
with the function $J_\mu^\alpha(x)$ defined by
\begin{align}\label{4.14}
J_\mu^\alpha(x)=\frac{1}{\Gamma(\alpha)}\int_0^1(x-\tau)^{\alpha-1}P_\mu(\tau)d\tau,
\end{align}
and $I^\alpha P_\mu(c)$ is the Riemann-Liouville integral of $P_\mu(c)$, as defined in (\ref{4.3}).

It must be emphasized that the approximate solutions (\ref{4.13a}) and (\ref{4.13b}) involve integral terms that cannot be directly calculated. Therefore, to obtain a practical numerical method, the Fourier coefficients $\gamma_{\mu}^i(\hat{\sigma}_i)$ and $\gamma_{\mu}(\sigma_j)$ in (\ref{4.12}) need to be approximated numerically. Here, we consider using Gauss-Jacobi quadrature of order $2k$ for a suitable $k\ge s$, with abscissae $c_{\bar{i}}$ given by $P_k(c_{\bar{i}})=0,\bar{i}=1,2,\cdots,k$, and the corresponding weight $b_{\bar{i}}$ for $\bar{i}=1,2,\cdots,k$, yielding the approximations for $\mu=0,1,\cdots,s-1$,
\begin{align}\label{4.15}
\gamma_{\mu}^i(\hat{\sigma}_i)\approx \sum_{{\bar{i}}=1}^kb_{\bar{i}}P_\mu(c_{\bar{i}})g(c_{\bar{i}}h_i,\hat{\sigma}_i(c_{\bar{i}}h_i))=:\hat{\gamma}_{\mu}^i,\quad i=1,2,\cdots,v,
\end{align}
and
\begin{align}\label{4.16}
\gamma_{\mu}(\sigma_j)\approx \sum_{{\bar{i}}=1}^kb_{\bar{i}}P_\mu(c_{\bar{i}})g(c_{\bar{i}}h,\sigma_j(c_{\bar{i}}h))=:\hat{\gamma}_{\mu}^j,\quad j=m+1,m+2,\cdots,M.
\end{align}
Meanwhile, we also need to numerically evaluate the following integrals involved in (\ref{4.13a}) and (\ref{4.13b}) for $\mu=0,1,\cdots,s-1$ and $\bar{i}=1,2,\cdots,k$:
\begin{subequations}
\begin{align}
&I^\alpha P_{\mu}(c_{\bar{i}}),\label{4.17a}\\
&J_\mu^\alpha(j+c_{\bar{i}}),\quad j=1,2,\cdots,M-m-1,\label{4.17b}\\
&J_\mu^\alpha\left(\frac{r^i-1}{r-1}+c_{\bar{i}}r^i\right),\quad i=1,2,\cdots,v-1,\label{4.17c}\\
&J_\mu^\alpha\left(\frac{(j+c_{\bar{i}})(r^v-1)-m(r^i-1)}{mr^i(r-1)}\right),i=0,1,\cdots,v-1,j=m,m+1,\cdots,M-1,\label{4.17d}
\end{align}
\end{subequations}
where the expression of $J_\mu^\alpha(x)$ is given by (\ref{4.14}). From the computational point of view, the integrals (\ref{4.17a})-(\ref{4.17d}) can be pre-computed once for all, allowing for later use. In order to calculate these integrals (\ref{4.17a}), (\ref{4.17b}) and (\ref{4.17c}) on the mixed mesh defined by (\ref{4.6}) and (\ref{4.7}) effectively and efficiently, we adopt the method described in Section 3.2 of \cite{brugnano2024numerical}. In addition, the integral (\ref{4.17d}) also can be computed following the approximation procedure outlined in \cite{brugnano2024numerical} for computing the integrals (\ref{4.17b}) and (\ref{4.17c}).

Based on the above facts, the fully discrete approximations corresponding to (\ref{4.13a}) and (\ref{4.13b}) will be obtained. For sake of brevity, we shall continue using $\hat{\sigma}_i$ and $\sigma_j$ to represent the fully discrete approximations, and we obtain the following expressions
\begin{align}\label{4.18}
\begin{split}
&\hat{\sigma}_i(ch_i)=\hat{\varphi }_i(c)+h_i^\alpha\sum_{\mu=0}^{s-1}I^\alpha P_\mu(c)\hat{\gamma}_{\mu}^i,\ i=1,2,\cdots,v\\
&\sigma_j(ch)=\varphi_j(c)+h^\alpha\sum_{\mu=0}^{s-1}I^\alpha P_\mu(c)\hat{\gamma}_\mu^j,\ j=m+1,m+2,\cdots,M.
\end{split}
\end{align}
As is clear, the coefficients $\hat{\gamma}_{\mu}^l,\ l=1,2,\cdots,i-1$ and $\hat{\gamma}_\mu^\rho ,\ \rho=m+1,m+2,\cdots,j-1$ for $\mu=0,1,\cdots,s-1$, used to calculate the memory terms $\hat{\varphi }_i(c)$ and $\varphi_j(c)$ respectively, have been already computed at the previous time-steps. Moreover, we find that in order to calculate the Fourier coefficients (\ref{4.15}) and (\ref{4.16}), it is sufficient to calculate (\ref{4.18}) only at the quadrature abscissae $c_1,c_2,\cdots,c_k$. By doing so, one has the discrete problems in the following form for $\mu=0,1,\cdots,s-1$,
\begin{align}\label{4.19}
\hat{\gamma}_{\mu}^i=\sum_{{\bar{i}}=1}^kb_{\bar{i}}P_\mu(c_{\bar{i}})g\left(c_{\bar{i}}h_i,\hat{\varphi }_i(c_{\bar{i}})+h_i^\alpha\sum_{l=0}^{s-1}I^\alpha P_l(c_{\bar{i}})\hat{\gamma}_{l}^i\right),\ i=1,2,\cdots,v,
\end{align}
and
\begin{align}\label{4.20}
\hat{\gamma}_{\mu}^j=\sum_{{\bar{i}}=1}^kb_{\bar{i}}P_\mu(c_{\bar{i}})g\left(c_{\bar{i}}h,\varphi _i(c_{\bar{i}})+h^\alpha\sum_{l=0}^{s-1}I^\alpha P_l(c_{\bar{i}})\hat{\gamma}_{l}^j\right),\ j=m+1,m+2,\cdots,M.
\end{align}
Once (\ref{4.19}) and (\ref{4.20}) have been solved, and based on the fact
\begin{align*}
I^\alpha P_\mu(1)=J_\mu^\alpha(1)=\frac{\delta_{\mu0}}{\Gamma(\alpha+1)},
\end{align*}
which is obtained by (\ref{4.5}) and considering $c=1$, the approximation of $y(\hat{t}_i)$ with $i=1,2,\cdots,v$ is given by
\begin{align}\label{4.21}
\hat{y}_i:=\hat{\sigma}_i(h_i)=\hat{\varphi }_i(1)+\frac{h_i^\alpha}{\Gamma(\alpha+1)}\hat{\gamma}_0^i.
\end{align}
Similarly, it is easy to get the approximation of $y(t_j)$ for $j=m+1,m+2,\cdots,M$, that is
\begin{align}\label{4.22}
{y}_j:=\sigma_j(h)=\varphi_j(1)+\frac{h^\alpha}{\Gamma(\alpha+1)}\hat{\gamma}_0^j.
\end{align}
Strictly speaking, the method (\ref{4.18}) and (\ref{4.21})-(\ref{4.22}) defines a \textbf{FHBVM}$\bm{(k,s)}$ method with $k\ge s$. Furthermore, for sake of brevity, by introducing $\bm{c}=(c_1,c_2,$ $\cdots,c_k)^T\in \mathbb{R}^k$ and the vectors
\begin{align*}
&\bm{\hat{\gamma}}^i=\begin{pmatrix}
 \hat{\gamma}_0^i\\
 \vdots\\
 \hat{\gamma}_{s-1}^i
\end{pmatrix}\in\mathbb{R}^{s(N+1)},\quad \bm{\hat{\gamma}}^j=\begin{pmatrix}
 \hat{\gamma}_0^j\\
 \vdots\\
 \hat{\gamma}_{s-1}^j
\end{pmatrix}\in\mathbb{R}^{s(N+1)},\\
&\bm{\hat{\varphi}}_i=\begin{pmatrix}
 \hat{\varphi}_i(c_1)\\
 \vdots\\
 \hat{\varphi}_i(c_k)
\end{pmatrix}\in\mathbb{R}^{k(N+1)},\quad
\bm{\varphi}_j=\begin{pmatrix}
 \varphi_j(c_1)\\
 \vdots\\
 \varphi_j(c_k)
\end{pmatrix}\in\mathbb{R}^{k(N+1)},
\end{align*}
and the matrices $\Omega=diag(b_1,b_2,\cdots,b_k)\in \mathbb{R}^{k\times k}$ and
\begin{small}
\begin{align*}
\mathcal{P}_s=\begin{pmatrix}
 P_0(c_1) & \cdots & P_{s-1}(c_1)\\
 \vdots  &   & \vdots\\
 P_0(c_k) & \cdots & P_{s-1}(c_k)
\end{pmatrix}\in\mathbb{R}^{k\times s},\quad \mathcal{I}_s^\alpha=\begin{pmatrix}
 I^\alpha P_0(c_1) & \cdots & I^\alpha P_{s-1}(c_1)\\
 \vdots  &   & \vdots\\
 I^\alpha P_0(c_k) & \cdots & I^\alpha P_{s-1}(c_k)
\end{pmatrix}\in\mathbb{R}^{k\times s},
\end{align*}
\end{small}
the discrete problem (\ref{4.19}) can be cast in the following vector form
\begin{align}\label{4.23}
\bm{\hat{\gamma}}^i=\mathcal{P}_s^T\Omega\otimes I_{N+1} g\left(\bm{c}h_i,\bm{\hat{\varphi}}_i+h_i^\alpha\mathcal{I}_s^\alpha\otimes I_{N+1}\bm{\hat{\gamma}}^i\right),\ i=1,2,\cdots,v,
\end{align}
and (\ref{4.20}) becomes
\begin{small}
\begin{align}\label{4.24}
\bm{\hat{\gamma}}^j=\mathcal{P}_s^T\Omega\otimes I_{N+1} g\left(\bm{c}h,\bm{\varphi}_i+h^\alpha\mathcal{I}_s^\alpha\otimes I_{N+1} \bm{\hat{\gamma}}^j\right),j=m+1,m+2,\cdots,M,
\end{align}
\end{small}
where $I_{N+1}$ is the $(N+1)$-dimensional identity matrix.

\begin{remark}
In fact, the FHBVM methods can be viewed as a class of Runge-Kutta methods. By defining the $k$-dimensional block vectors
\begin{align}\label{4.25}
\hat{Y}^i:=\bm{\hat{\varphi}}_i+h_i^\alpha\mathcal{I}_s^\alpha\otimes I_{N+1}\bm{\hat{\gamma}}^i,\ i=1,2,\cdots,v,
\end{align}
and
\begin{align}\label{4.26}
Y^j:=\bm{\varphi}_i+h^\alpha\mathcal{I}_s^\alpha\otimes I_{N+1} \bm{\hat{\gamma}}^j,\ j=m+1,m+2,\cdots,M,
\end{align}
these vectors are used as arguments in the function $g$ on the right-hand side of equations (\ref{4.23}) and (\ref{4.24}), respectively.  Then, $\hat{Y}^i$ and $Y^j$ can be regarded as the stage vectors of a Runge-Kutta method, tailored to the problem at hand. Substituting (\ref{4.23}) into (\ref{4.25}) and using $\hat{Y}^i$ as the argument of $g$ yields
\begin{align}\label{4.27}
\hat{Y}^i = \bm{\hat{\varphi}}_i + h_i^\alpha \mathcal{I}_s^\alpha \mathcal{P}_s^T \Omega \otimes I_{N+1} g(\bm{c}h_i,\hat{Y}^i).
\end{align}
Similarly, substituting (\ref{4.24}) into (\ref{4.26}) gives
\begin{align}\label{4.28}
Y^j = \bm{\varphi}_i + h^\alpha \mathcal{I}_s^\alpha \mathcal{P}_s^T \Omega \otimes I_{N+1} g(\bm{c}h,Y^j).
\end{align}
From (\ref{4.27}) and (\ref{4.28}), the results suggest that the FHBVM method is a $k$-stage Runge-Kutta method with the Butcher matrix $\mathcal{I}_s^\alpha\mathcal{P}_s^T\Omega$. It is worth noting that equations (\ref{4.27}) and (\ref{4.28}) are equivalent to equations (\ref{4.23}) and (\ref{4.24}), respectively. However, the efficiency of solving (\ref{4.23}) and (\ref{4.24}) is higher than that of solving (\ref{4.27}) and (\ref{4.28}), because the discrete system corresponding to the former has a block dimension $s$ that is independent of $k$, whereas the discrete system in the latter has a block dimension of $k$. In practical applications, $s$ is usually smaller than $k$.
\end{remark}

Finally, for completeness sake, the implementation is briefly described below. We consider solving the nonlinear problems (\ref{4.23}) and (\ref{4.24}), for which two iterative methods are considered: fixed-point iteration and blended iteration. To streamline our discussion, we only provide the detailed process of solving (\ref{4.23}) here, and the process of solving (\ref{4.24}) is similar to that of solving (\ref{4.23}). For the fixed-point iteration, the algorithm applied to the nonlinear equation (\ref{4.23}) is as follows
\begin{align}\label{4.29}
\bm{\hat{\gamma}}^{i,l}=\mathcal{P}_s^T\Omega\otimes I_{N+1} g\left(\bm{c}h_i,\bm{\hat{\varphi}}_i+h_i^\alpha\mathcal{I}_s^\alpha\otimes I_{N+1}\bm{\hat{\gamma}}^{i,l-1}\right),\quad l=1,2,\cdots,
\end{align}
which can be conveniently started from $\bm{\hat{\gamma}}^{i,0}=\bm{0}$, and the following theorem holds.

\begin{theorem}\label{t4.1}
Assume $g$ be Lipchitz with constant $\bar{L}$ in the interval $[t_{i-1},t_i],\ i=1,2,\cdots,v$. Then, the iteration (\ref{4.29}) is convergent for all timesteps $h_i$ such that
\begin{align*}
h_i^\alpha \bar{L}\|\mathcal{P}_s^T\Omega\|\|\mathcal{I}_s^\alpha\|< 1.
\end{align*}
\end{theorem}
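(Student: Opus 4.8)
The plan is to read the iteration (\ref{4.29}) as a fixed-point iteration $\bm{\hat{\gamma}}^{i,l}=\Phi(\bm{\hat{\gamma}}^{i,l-1})$ for the map
\[
\Phi(\bm{z}):=\mathcal{P}_s^T\Omega\otimes I_{N+1}\, g\!\left(\bm{c}h_i,\;\bm{\hat{\varphi}}_i+h_i^\alpha\mathcal{I}_s^\alpha\otimes I_{N+1}\,\bm{z}\right),\qquad \bm{z}\in\mathbb{R}^{s(N+1)},
\]
and then to prove that $\Phi$ is a contraction on $\mathbb{R}^{s(N+1)}$ under the stated stepsize restriction. Convergence of the iterates (from any starting value, in particular $\bm{\hat{\gamma}}^{i,0}=\bm{0}$) then follows at once from the Banach fixed-point theorem, with the contraction factor as the geometric rate.

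First I would take two arbitrary vectors $\bm{z}_1,\bm{z}_2\in\mathbb{R}^{s(N+1)}$, form $\Phi(\bm{z}_1)-\Phi(\bm{z}_2)$, and use submultiplicativity of the induced matrix norm to peel off the outer factor,
\[
\|\Phi(\bm{z}_1)-\Phi(\bm{z}_2)\|\le\|\mathcal{P}_s^T\Omega\otimes I_{N+1}\|\,\big\|g(\bm{c}h_i,\bm{w}_1)-g(\bm{c}h_i,\bm{w}_2)\big\|,
\]
where $\bm{w}_r:=\bm{\hat{\varphi}}_i+h_i^\alpha\mathcal{I}_s^\alpha\otimes I_{N+1}\,\bm{z}_r$. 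The next step is to invoke the Lipschitz hypothesis. Since $g(\bm{c}h_i,\cdot)$ acts blockwise on the $k$ components, one per quadrature abscissa $c_{\bar i}h_i\in[\hat t_{i-1},\hat t_i]$, the per-block Lipschitz estimate with constant $\bar L$ aggregates (in the Euclidean norm, where blocks decouple additively) to the same constant for the stacked evaluation, giving
\[
\big\|g(\bm{c}h_i,\bm{w}_1)-g(\bm{c}h_i,\bm{w}_2)\big\|\le\bar L\,\|\bm{w}_1-\bm{w}_2\|=\bar L\,h_i^\alpha\,\big\|\mathcal{I}_s^\alpha\otimes I_{N+1}(\bm{z}_1-\bm{z}_2)\big\|.
\]

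Combining these two bounds and applying submultiplicativity once more yields
\[
\|\Phi(\bm{z}_1)-\Phi(\bm{z}_2)\|\le h_i^\alpha\bar L\,\|\mathcal{P}_s^T\Omega\otimes I_{N+1}\|\,\|\mathcal{I}_s^\alpha\otimes I_{N+1}\|\,\|\bm{z}_1-\bm{z}_2\|.
\]
At this point I would apply the Kronecker identity $\|A\otimes I_{N+1}\|_2=\|A\|_2$, valid in the spectral norm because $\|I_{N+1}\|_2=1$, to drop both tensor factors. The contraction constant is then exactly $L_i:=h_i^\alpha\bar L\,\|\mathcal{P}_s^T\Omega\|\,\|\mathcal{I}_s^\alpha\|$, which is $<1$ precisely under the stated hypothesis, so $\Phi$ is a contraction and the claim follows.

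This is a clean contraction-mapping argument, so there is no single hard step; the only points requiring care are the two norm passages. The first is the transfer of the scalar (per-block) Lipschitz constant $\bar L$ of the original field $g$ to the same constant for the block-stacked evaluation $g(\bm{c}h_i,\cdot)$, and the second is the Kronecker-product norm identity $\|A\otimes I_{N+1}\|=\|A\|$. Both become routine once the vector norm is fixed as the Euclidean norm, so that the $k$ blocks decouple additively and the tensor factor $I_{N+1}$ contributes a factor of one.
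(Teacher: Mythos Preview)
Your proposal is correct and follows the standard Banach contraction-mapping argument; the paper itself does not spell out the proof but simply refers the reader to \cite[Theorem~2]{brugnano2024spectrally}, where precisely this contraction estimate is carried out. Your write-up therefore supplies exactly the details the paper omits, and the two care points you flag (blockwise transfer of the Lipschitz constant in the Euclidean norm and the identity $\|A\otimes I_{N+1}\|_2=\|A\|_2$) are the only nontrivial passages.
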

\begin{proof}
For a rigorous proof of this theorem the reader is referred to \cite[Theorem 2]{brugnano2024spectrally}.
\end{proof}

Furthermore, by defining
\begin{align*}
X_s^\alpha:=\mathcal{P}_s^T\Omega \mathcal{I}_s^\alpha,
\end{align*}
we introduce another type of iteration, namely the blended iteration, which is of Newton-type. A detailed analysis of its application to FDEs can be found in \cite{brugnano2024numerical}. Let us introduce the following equivalent form of the nonlinear equation (\ref{4.23}), that is
\begin{align*}
G(\bm{{\gamma}}^{i,l}):=\bm{\hat{\gamma}}^{i,l}-\mathcal{P}_s^T\Omega\otimes I_{N+1} g\left(\bm{c}h_i,\bm{\hat{\varphi}}_i+h_i^\alpha\mathcal{I}_s^\alpha\otimes I_{N+1}\bm{\hat{\gamma}}^{i,l}\right).
\end{align*}
The algorithm for the blended iteration is then provided as follows (refer to \cite[(61)]{brugnano2024numerical}):
\begin{algorithm}
\SetAlgoNoLine
\caption{Blended iterative methods for nonlinear system (\ref{4.23}).}
\For {$i=1,2,\ldots,v$}
{Set:~$\bm{{\gamma}}^{i,0}=\bm{0}$ and the factor\ matrix $\hat{\Psi}:=I_{N+1}-h_i^\alpha\rho_sg_0'$;}
\For {$r=0,1,\cdots,$}
{
\begin{align*}
\text{compute}:& \quad \bm{\Theta}_1^r:=-G(\bm{{\gamma}}^{i,r}),\quad \bm{\Theta}_2^r:=\rho_s(X_s^\alpha)^{-1}\otimes I_{N+1}\bm{\Theta}_1^r;\\
\text{solve}:&\quad (I_s\otimes\hat{\Psi})\bm{u}^r=\bm{\Theta}_1^r-\bm{\Theta}_2^r;\\
\text{solve}:&\quad (I_s\otimes\hat{\Psi})\Delta^r=\bm{\Theta}_2^r+\bm{u}^r;\\
\text{set}:&\quad \bm{{\gamma}}^{i,r+1}=\bm{{\gamma}}^{i,r}+\Delta^r.
\end{align*}
}
\end{algorithm}

\noindent The parameter $\rho_s$ in Algorithm 1 is defined as
\begin{align*}
\rho_s=\rm{argmin}_{\mu\in\sigma(X_s^\alpha)}\max_{\lambda\in\sigma(X_s^\alpha)}\frac{|\lambda-|\mu||^2}{2|\mu||\lambda|},
\end{align*}
in which $\sigma(X_s^\alpha)$ denotes the spectrum of $X_s$, and $g_0'$ is the Jacobian of $g$ defined in (\ref{4.1}) evaluated at the first entry of $\hat{\varphi}_i$. Form the expression of the function $g$, it becomes straightforward to obtain that
\begin{align*}
g_0'=(H^2)^T+{\Psi}^{-1}\Lambda-{\Psi}^{-1}C_N\Psi.
\end{align*}
To improve computational efficiency, this paper adopts an automatic switching strategy between the fixed-point iteration (\ref{4.29}) and the blended iteration (see Algorithm 1), based on the following condition (refer to Theorem \ref{t4.1}):
\begin{align*}
h_i^\alpha\|g_0'\|\|\mathcal{P}_s^T\Omega\|\|\mathcal{I}_s^\alpha\|\le tol,
\end{align*}
where $tol<1$ is a suitable tolerance threshold.

\begin{remark}
For the error analysis of the FHBVM($k,s$) method, the relevant theoretical analysis and numerical verification have been thoroughly discussed in \cite{brugnano2024spectrally,brugnano2025analysis}. The results demonstrate that FHBVMs achieve a spectral accuracy in time when sufficiently large value of $s$ is chosen.
\end{remark}

\begin{remark}
In order to achieve the anticipated results, we choose to use FHBVM$(k,$ $s)$ method with $k=s=22$ in the subsequent numerical tests. The motivation behind this choice is twofold. On the one hand, it is based on the linear stability analysis of the FHBVM$(22,22)$ method, as presented in \cite{brugnano2025analysis}, which demonstrates the effectiveness of high-order methods in approximating the one-parameter Mittag-Leffler function. On the other hand, this method also possesses the capability to achieve high accuracy in the time direction. It is also worth noting that the code corresponding to the FHBVM($22,22$) method, named \textbf{fhbvm2}, is an enhanced version of the original \textbf{fhbvm} code described in \cite{brugnano2024numerical}. Both \textbf{fhbvm} and \textbf{fhbvm2} are publicly available at the URL provided in \cite{fhbvm2}.
\end{remark}
\vskip 0.6cm

{\section{Numerical tests}\label{s5}}
\setcounter{subsection}{0} \setcounter{equation}{0}\setcounter{figure}{0}\setcounter{table}{0}
In this section, we will introduce some numerical experiments to illustrate and test the behavior of the methods described in the previous sections. For a detailed analysis, we report the errors computed in the $L^\infty$ norm
\begin{equation*}
e_\infty=\|u(x_k,t_i)-u_N(x_k,t_i)\|_\infty=\max_{0\le k\le N}\max_{0\le i\le M+v-m}|u(x_k,t_i)-u_N(x_k,t_i)|,
\end{equation*}
and the $L^2$ norm
\begin{align*}
e_2&=\|u(x,t)-u_N(x,t)\|_2=\left(\int_0^T\int_a^b|u(x,t)-u_N(x,t)|^2dxdt\right)^{1/2},
\end{align*}
where $u_N(x,t)$, defined in (\ref{3.7}), is the computed approximation solution, and $u(x,t)$ is the corresponding exact solution of the considered problem. All algorithms are implemented in Matlab 2019b.
\vskip 0.6cm

{\subsection{Example 1}\label{s5:1}}
Our first numerical experiment is the following time-fractional diffusion problem:
\begin{small}
\begin{align}\label{5.1}
\begin{cases}
D_t^{\alpha} u(x,t) = \partial_{x}^{2} u(x,t)-(2x+1)u(x,t)+f(x,t),\quad (x,t)\in(0,1)\times(0,1],\\
u(0,t)-2u_x(0,t)=0,\ u(1,t)+2u_x(1,t)=0,& \\
u_0(x)=0,&
\end{cases}
\end{align}
\end{small}
where $c(x)=2x+1\in(1,3)$, as assumed in the introduction. The exact analytical solution is given by
\begin{align}\label{5.2}
u(x,t)=x^2(1-x)^2e^xt^{\alpha+2},
\end{align}
then the function $f(x,t)$ can be calculated as
\begin{small}
\begin{align*}
f(x,t)=0.5e^xt^2\left[x^2(1-x)^2\Gamma (\alpha+3)-2(x^4+6x^3-8x+2-(2x+1)x^2(1-x)^2)t^\alpha\right].
\end{align*}
\end{small}
It is evident that this solution (\ref{5.2}) and the corresponding vector field $g(t,y)$ (defined in (\ref{4.1})) are smooth at the initial time $t=0$ for all $\alpha\in(0,1)$ .

We first investigate the spatial errors in two discrete norms: $L^\infty$ and $L^2$. The time direction mesh is chosen as a mixed mesh with parameters $M=200$, $v=20$, and $m=1$ to ensure that the mesh is fine enough so that the errors from the time approximation are negligible. Fig. \ref{fig1} shows the spatial errors for different truncation coefficients $N=1,3,5,7,9,10,11,12,13,14$ at different values of $\alpha=0.1$, $\alpha=0.5$, and $\alpha=0.9$. As expected from the theoretical findings, the errors exhibit an exponential decay until they reach the machine precision, ultimately stabilizing around $10^{-12}$. Next, for the temporal errors, only a uniform mesh needs to be used, as the exact solution and the corresponding vector field are smooth at $t=0$. To numerically test the accuracy of the time discretization, it can be observed from Fig. \ref{fig1} that selecting $N=10$ allows the spatial errors to achieve spectral accuracy. As a result, since the spatial error of the spectral method becomes negligible, the global error is essentially determined by the time discretization. Furthermore, Table \ref{Tab1} displays the temporal errors for two discrete norms on a uniform mesh with $h=1/M \ (M=2,3,4,5,6)$, fixed at $N=10$ and for four different values of $\alpha$. From Table \ref{Tab1}, it is evident that the temporal errors are basically of the order of the machine precision, independently of the timestep. Finally, the three-dimensional plots of the numerical solution, exact solution, and absolute error are shown in Fig. \ref{fig2}, with results that are consistent with those in Table \ref{Tab1}.
\vskip -0.7cm

\begin{figure}[ht] \centering
\includegraphics[width=5.5cm]{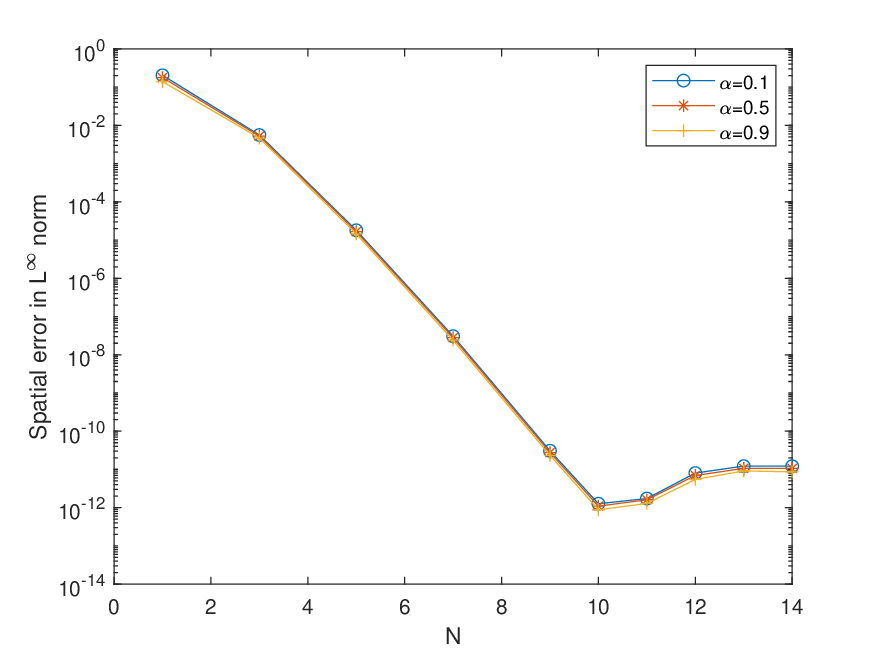}
\includegraphics[width=5.5cm]{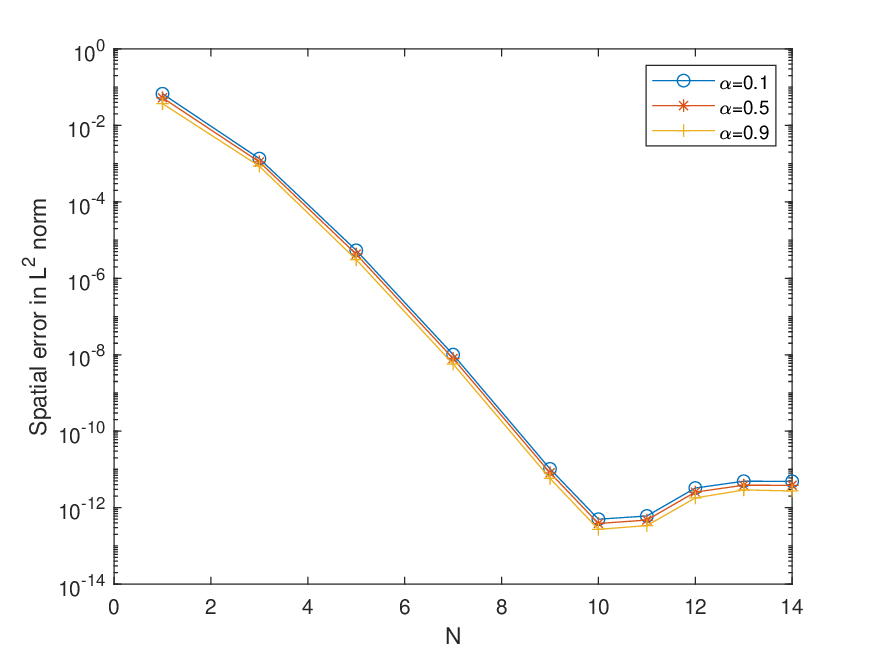}
\caption{The spatial errors in the $L^\infty$ norm (left plot) and $L^2$ norm (right plot) on the mixed mesh with $h_0=4.77\cdot10^{-9},r=2,m=1,h=0.005$ for different $\alpha$ (Example 1).}
\label{fig1}
\end{figure}

\begin{table}[htbp]
\renewcommand{\arraystretch}{1.3}
\centering
\caption{The temporal errors on uniform mesh with $h=1/M$ for four values of $\alpha$ and fixed $N=10$ (Example 1).}
\scalebox{0.86}{
\begin{tabular}{ccccccccc}
\toprule
\multirow{2}{*}{M} & \multicolumn{2}{c}{$\alpha=0.1$} & \multicolumn{2}{c}{$\alpha=0.3$} & \multicolumn{2}{c}{$\alpha=0.6$} & \multicolumn{2}{c}{$\alpha=0.9$} \\
  & $e_\infty$ & $e_2$ & $e_\infty$ & $e_2$ & $e_\infty$ & $e_2$ & $e_\infty$ & $e_2$ \\
\midrule
2 & 1.254\text{e-}12 & 5.899\text{e-}13 & 1.170\text{e-}12 & 5.430\text{e-}13 & 1.055\text{e-}12 & 4.726\text{e-}13 & 9.788\text{e-}13 & 3.856\text{e-}13 \\
3 & 1.257\text{e-}12 & 5.412\text{e-}13 & 1.168\text{e-}12 & 4.863\text{e-}13 & 1.056\text{e-}12 & 4.146\text{e-}13 & 8.949\text{e-}13 & 3.299\text{e-}13 \\
4 & 1.260\text{e-}12 & 5.221\text{e-}13 & 1.169\text{e-}12 & 4.675\text{e-}13 & 1.051\text{e-}12 & 3.919\text{e-}13 & 8.500\text{e-}13 & 3.057\text{e-}13 \\
5 & 1.261\text{e-}12 & 5.131\text{e-}13 & 1.170\text{e-}12 & 4.555\text{e-}13 & 1.047\text{e-}12 & 3.798\text{e-}13 & 8.520\text{e-}13 & 2.945\text{e-}13 \\
6 & 1.262\text{e-}12 & 5.082\text{e-}13 & 1.172\text{e-}12 & 4.515\text{e-}13 & 1.043\text{e-}12 & 3.731\text{e-}13 & 8.810\text{e-}13 & 2.873\text{e-}13 \\
\bottomrule
\end{tabular}}
\label{Tab1}
\end{table}

\begin{figure}[htbp]
\subfigure[Numerical solution] {
 \label{fig2:a}
\includegraphics[width=0.3\linewidth]{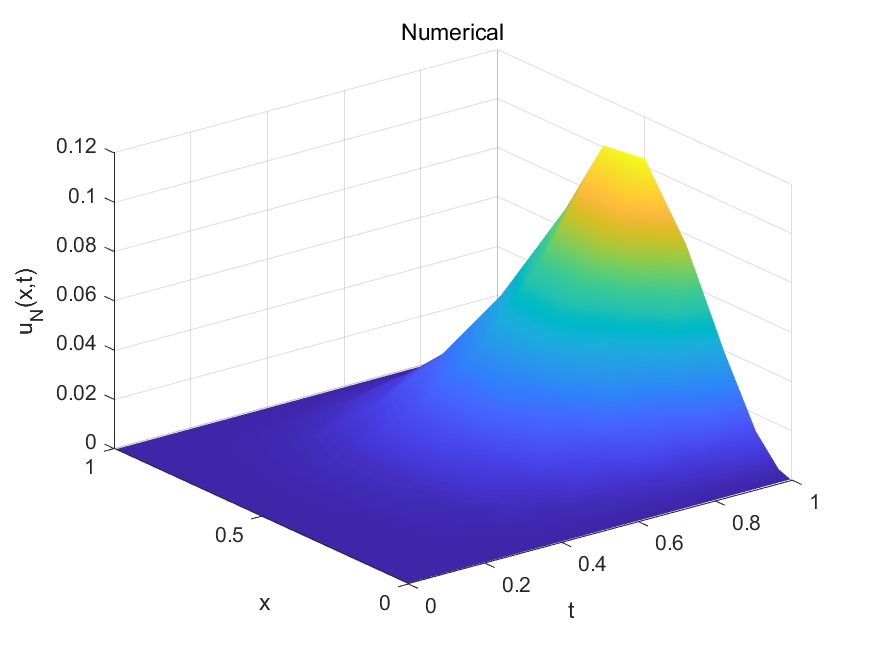}
}
\subfigure[Exact solution] {
\label{fig2:b}
\includegraphics[width=0.3\linewidth]{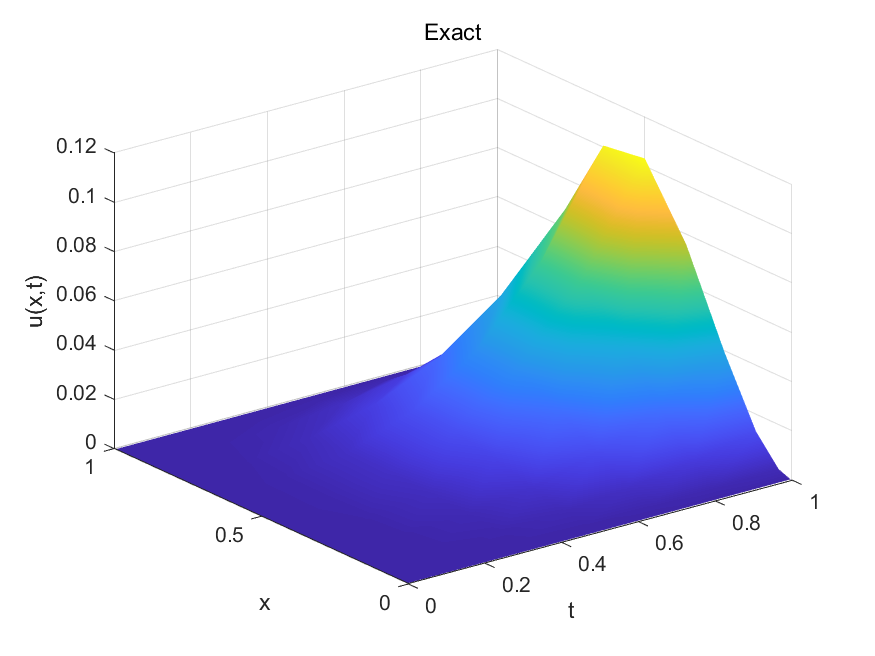}
}
\subfigure[Absolute error] {
\label{fig2:c}
\includegraphics[width=0.3\linewidth]{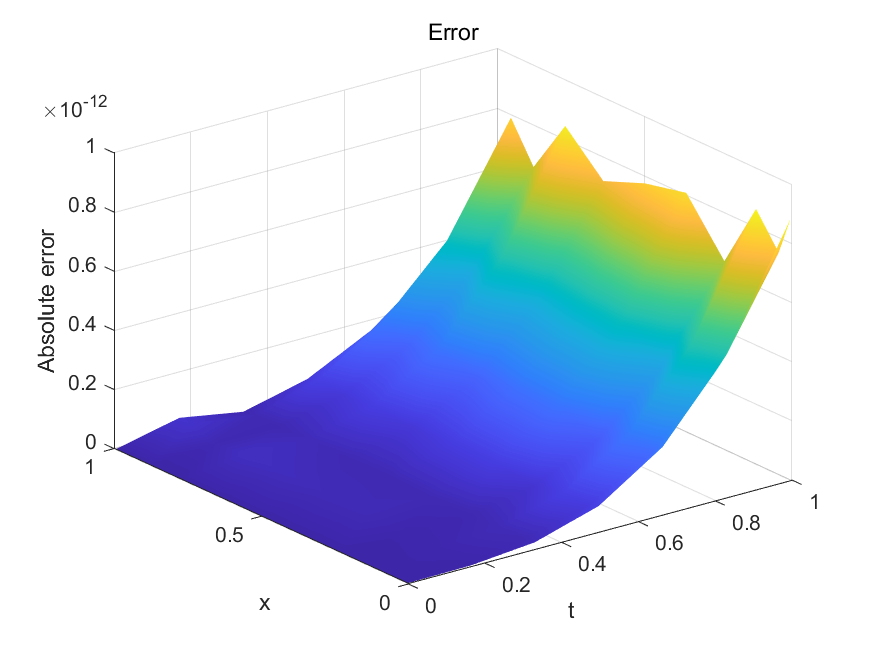}
}
\caption{Three-dimensional plots of the numerical solution (left plot), exact solution (middle plot) and absolute error (right plot) with $M=6,v=m=1,N=10$ and $\alpha=0.5$ (Example 1).}
\label{fig2}
\end{figure}

{\subsection{Example 2}\label{s5:2}}
Here, we consider the time-fractional diffusion problem:
\begin{small}
\begin{align}\label{5.3}
\begin{cases}
D_t^{\alpha} u(x,t) = \partial_{x}^{2} u(x,t)-(x-\frac{\pi}{4})u(x,t)+f(x,t),\ (x,t)\in(\frac{\pi}{4},\frac{3\pi}{4})\times(0,T],\\
u(\frac{\pi}{4},t)-u_x(\frac{\pi}{4},t)=0,\ u(\frac{3\pi}{4},t)+u_x(\frac{3\pi}{4},t)=0,& \\
u_0(x)=-0.05\sin(x).&
\end{cases}
\end{align}
\end{small}
with $c(x)=x-\frac{\pi}{4}\in(0,\frac{\pi}{2})$, whose exact solution is
\begin{align}\label{5.4}
u(x,t)=-0.05E_\alpha(-t^\alpha)\sin x.
\end{align}
We know that the solution (\ref{5.4}) has a weak singularity at $t=0$ for all $\alpha\in(0,1)$ and is less smooth than Example 1 with respect to the time variable. Moreover, it is clear that the vector field $g(t,y)$ in (\ref{4.1}), alike the solution, is also nonsmooth at $t=0$. Subsequently, a routine computation gives rise to
\begin{align*}
f(x,t)=-0.05E_\alpha(-t^\alpha)\left(x-\frac{\pi}{4}\right)\sin x,
\end{align*}
where $E_\alpha(\cdot)$ is the Mittag-Leffler function defined in (\ref{2.3}), which will be computed by using the Matlab code ml \cite{ml2015function,garrappa2015numerical}.

First, we study the convergence behavior of the spatial error of numerical solutions with respect to $N$ in two discrete norms $L^\infty$ and $L^2$, as shown in Fig. \ref{fig3}. The time direction selects a mixed mesh with parameters $M=400$, $v=100$, and $m=1$, with very fine grids chosen to ensure that the error in the time direction can be neglected. Clearly, from Fig. \ref{fig3}, it can be observed that the error decays exponentially, achieving spectral accuracy in two discrete norms.   Next, we test the errors in the temporal direction under two discrete norms: $L^\infty$ and $L^2$. Since the solution and vector field are nonsmooth at $t=0$, a mixed mesh is mandatory. By fixing $N=8,v=15$ and $m=1$, the temporal errors on the mixed mesh with $M=20s\ (s=1,2,3,4,5)$ for four values of $\alpha$ are listed in Table \ref{Tab2}. As can be seen, the temporal errors are dependent on the value of $\alpha$ for the same mesh. This indicates that the smoothness of the true solution at $t=0$ has a certain impact on the error accuracy of the numerical solution. However, fortunately, even when $\alpha$ is small, the achieved error accuracy is still satisfactory. In Fig.\ref{fig4}, we present three-dimensional plots of the numerical solution, exact solution and absolute error for $\alpha=0.5$. From these plots, it is intuitively clear that the numerical solution closely matches the true solution.

\begin{figure}[htbp] \centering
\includegraphics[width=5.5cm]{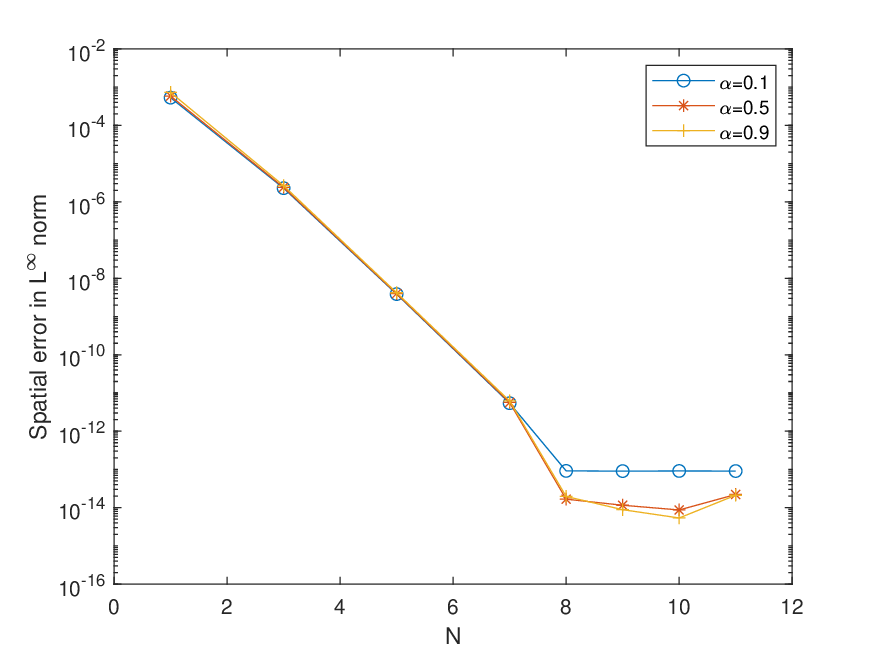}
\includegraphics[width=5.5cm]{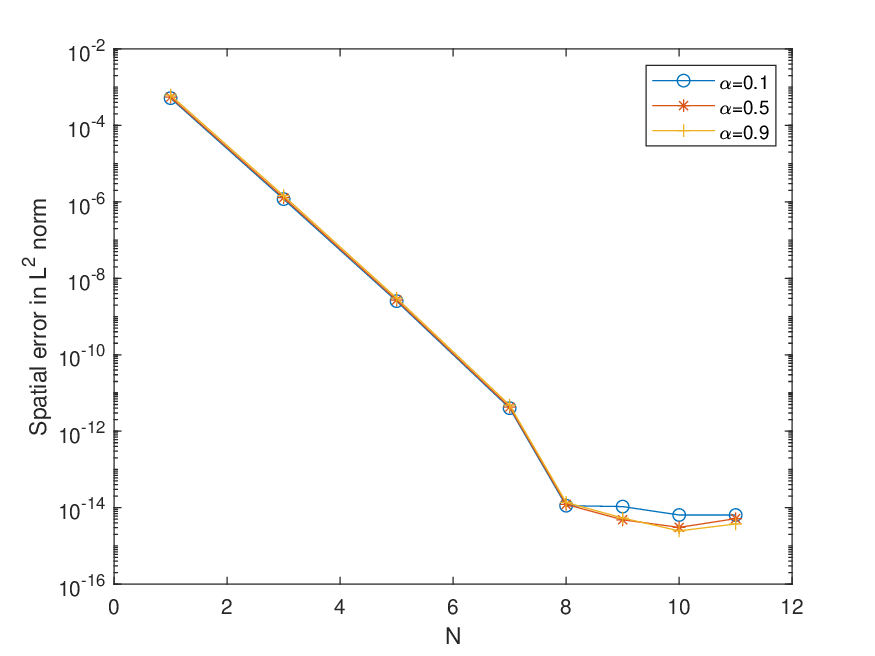}
\caption{The spatial errors in the $L^\infty$ norm (left plot) and $L^2$ norm (right plot) on the mixed mesh with $h_0=1.97\cdot10^{-33},r=2,m=1,h=0.0025$ for different $\alpha$ (Example 2).}
\label{fig3}
\end{figure}

\begin{table}[htbp]
\renewcommand{\arraystretch}{1.3}
\centering
\caption{The temporal errors on the mixed mesh with $v=15,m=1$ and $M=20s\ (s=1,2,3,4,5)$ when $N=8$ (Example 2).}
\scalebox{0.86}{
\begin{tabular}{ccccccccc}
\toprule
\multirow{2}{*}{M} & \multicolumn{2}{c}{$\alpha=0.1$} & \multicolumn{2}{c}{$\alpha=0.3$} & \multicolumn{2}{c}{$\alpha=0.6$} & \multicolumn{2}{c}{$\alpha=0.9$} \\
  & $e_\infty$ & $e_2$ & $e_\infty$ & $e_2$ & $e_\infty$ & $e_2$ & $e_\infty$ & $e_2$ \\
\midrule
20  & 9.688\text{e-}09 & 1.941\text{e-}11 & 1.840\text{e-}10 & 4.541\text{e-}13 & 3.592\text{e-}14 & 1.729\text{e-}14 & 2.385\text{e-}14 & 1.382\text{e-}14 \\
40  & 8.822\text{e-}09 & 1.253\text{e-}11 & 1.227\text{e-}10 & 2.164\text{e-}13 & 2.075\text{e-}14 & 1.262\text{e-}14 & 1.894\text{e-}14 & 1.366\text{e-}14 \\
60  & 8.343\text{e-}09 & 9.690\text{e-}12 & 9.672\text{e-}11 & 1.402\text{e-}13 & 1.853\text{e-}14 & 1.248\text{e-}14 & 2.312\text{e-}14 & 1.369\text{e-}14 \\
80  & 8.017\text{e-}09 & 8.071\text{e-}12 & 8.170\text{e-}11 & 1.031\text{e-}13 & 1.772\text{e-}14 & 1.245\text{e-}14 & 2.936\text{e-}14 & 1.407\text{e-}14 \\
100 & 7.770\text{e-}09 & 7.002\text{e-}12 & 7.167\text{e-}11 & 8.136\text{e-}14 & 1.720\text{e-}14 & 1.245\text{e-}14 & 2.770\text{e-}14 & 1.385\text{e-}14 \\
\bottomrule
\end{tabular}}
\label{Tab2}
\end{table}

\begin{figure}[htbp] \centering
\subfigure[Numerical solution] {
 \label{fig4:a}
\includegraphics[width=0.3\linewidth]{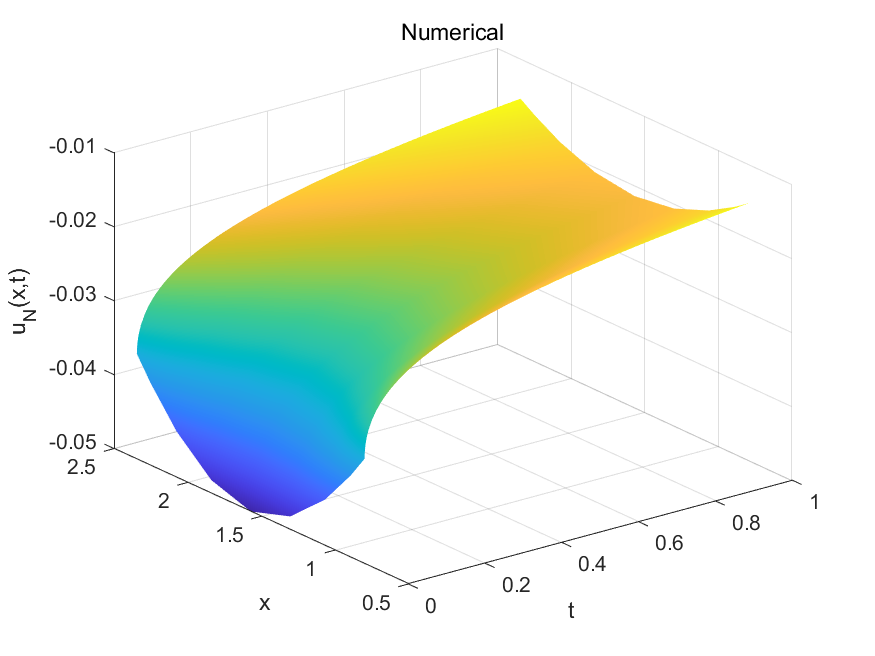}
}
\subfigure[Exact solution] {
\label{fig4:b}
\includegraphics[width=0.3\linewidth]{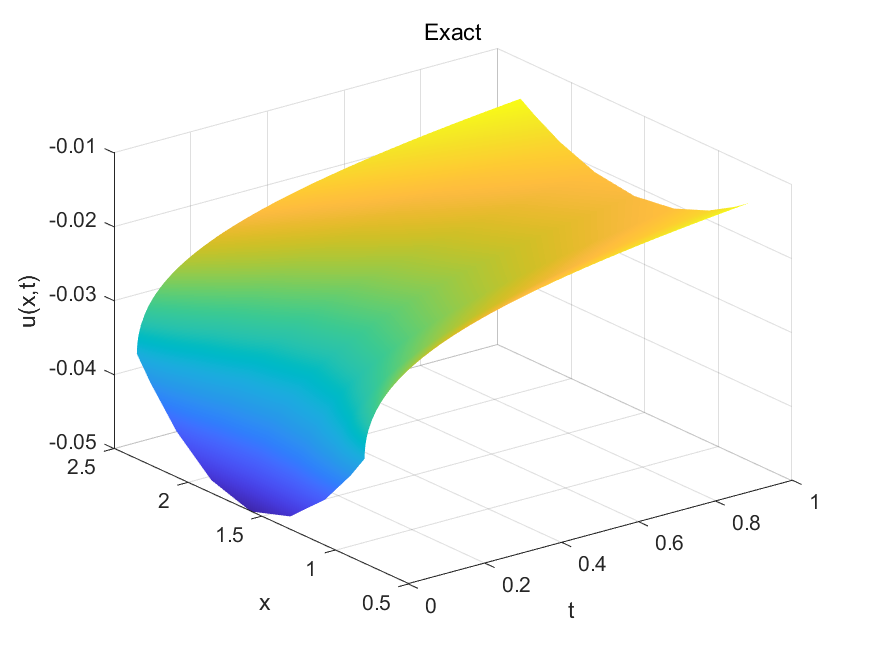}
}
\subfigure[Absolute error] {
\label{fig4:c}
\includegraphics[width=0.3\linewidth]{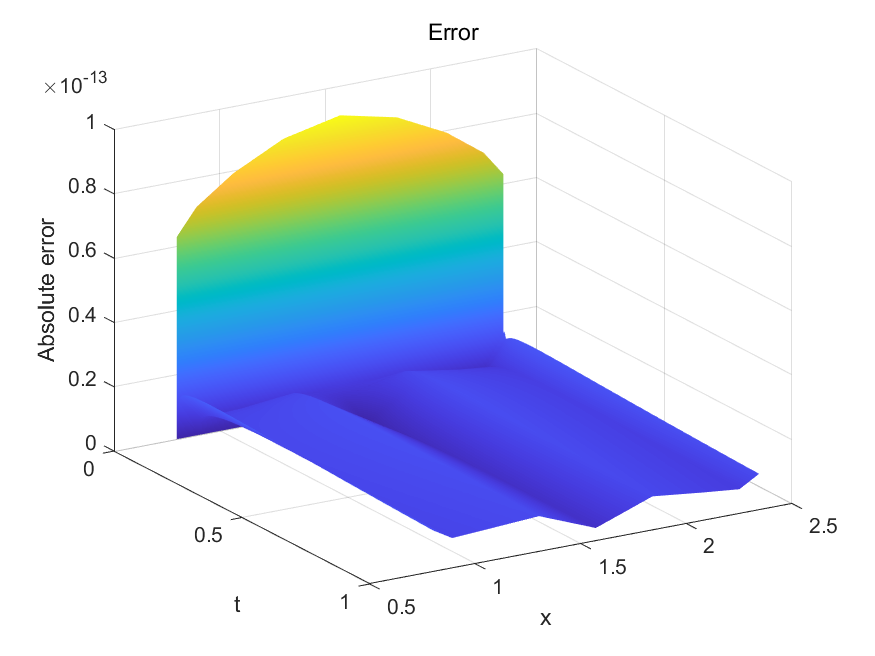}
}
\caption{Three-dimensional plots of the numerical solution (left plot), exact solution (middle plot) and absolute error (right plot) with $M=100,v=15,m=1,N=8$ and $\alpha=0.5$ (Example 2).}
\label{fig4}
\end{figure}
\vskip 0.5cm

{\subsection{Example 3}\label{s5:3}}
Finally, we study a special case of (\ref{1.1})-(\ref{1.3}), namely the time-fractional diffusion problem with homogeneous Dirichlet boundary conditions:
\begin{align}\label{5.5}
\begin{cases}
D_t^{\alpha} u(x,t) = \partial_{x}^{2} u(x,t)+f(x,t),& (x,t)\in(0,1)\times(0,1],\\
u(0,t)=u(1,t)=0,& \\
u_0(x)=0,&
\end{cases}
\end{align}
with an exact solution
\begin{align}\label{5.6}
u(x,t)=t^2\sin(2\pi x),
\end{align}
which is smooth at $t=0$, and the corresponding vector field is also smooth at $t=0$ for any $\alpha\in(0,1)$. In this case, $c(x)=0$ and the function $f(x,t)$ is given by
\begin{align*}
f(x,t)=\sin(2\pi x)\left[\frac{2}{\Gamma(3-\alpha)}t^{2-\alpha}+4\pi^2 t^2\right].
\end{align*}
The problem (\ref{5.5})-(\ref{5.6}) is also addressed in \cite{lin2007finite}, where a finite difference scheme is used for time discretization and the Legendre spectral method is applied in space. In this context, we define the method in \cite{lin2007finite} as \textbf{FDLS}.

To achieve the desired results, we set the time mesh to be the same as in example 1 to ensure a very fine mesh in the temporal direction. This allows us to demonstrate the convergence behavior of numerical errors in the spatial direction, as shown in Fig. \ref{fig5}. As expected, the errors exhibit exponential convergence because the exact solution is smooth in the spatial variables. Next, the errors in the time direction also need be studied. Following the lines of example 1 and fixing $N=11$, we present some results in Table \ref{Tab3}, which suggest that our method can reach the best spatial error accuracy as shown in Fig. \ref{fig5}, even with large step sizes. In Fig. \ref{fig6}, we provide three-dimensional plots of numerical and exact solutions. Overall, the numerical solution obtained by our method is consistent with the true solution. For greater precision, it can be observed from Fig. \ref{fig6:c} that the absolute error reaches $10^{-8}$, indicating that obtained numerical solution can simulate the true solution very well.

Additionally, to highlight the advantages of the our proposed method, we evaluate the performance of our method and the FDLS method in terms of operational efficiency. Since both methods achieve spectral accuracy in space, we focus on the time error under a consistent spatial error. Specifically, the $L^\infty$ norm reaches a level of $10^{-8}$, while the $L^2$ norm reaches a level of $10^{-9}$. Fig. \ref{fig7} plots the time error in the $L^\infty$ norm and $L^2$ norm of the numerical solution versus the CPU time for two different values of $\alpha$. The plots compare our scheme, which uses a uniform time mesh with timestep $h=1/s\ (s=1,2,3,4,5)$, to the FDLS scheme with timestep $h=1/(200s)\ (s=1,2,5,10,20,50)$ for the case $\alpha=0.6$ (shown in Fig. \ref{fig7:a}), and with timestep $h=1/(200s)\ (s=1,5,10,50,100)$ for the case $\alpha=0.99$ (shown in Fig. \ref{fig7:b}). As shown in Fig. \ref{fig7}, we can draw the following clear conclusions: for a given error, the computational cost of our method is lower than that of the FDLS method, and the chosen timestep is also significantly larger than the one selected by the FDLS method.
\vskip 0.6cm

\begin{figure}[htb] \centering
\includegraphics[width=5.5cm]{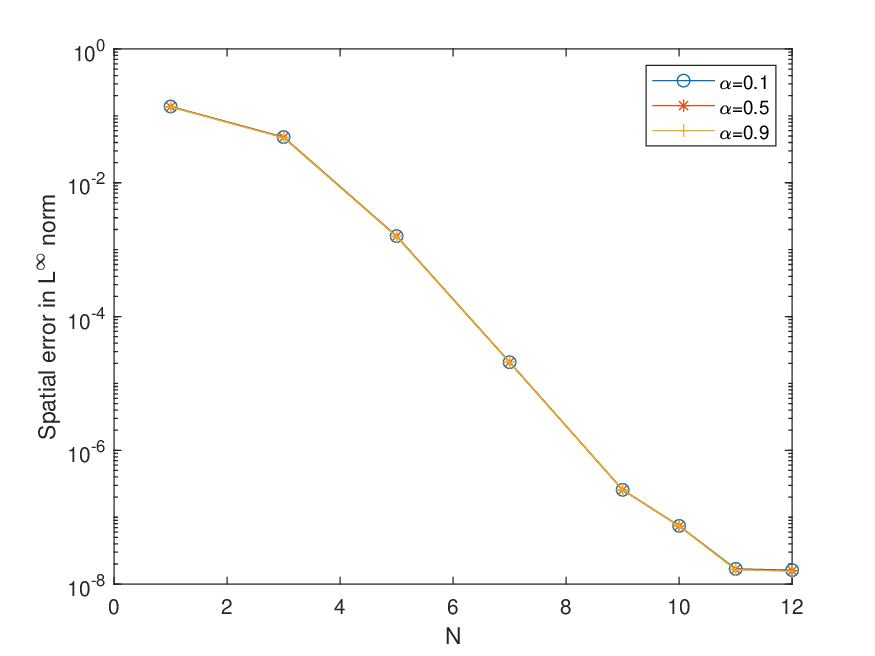}
\includegraphics[width=5.5cm]{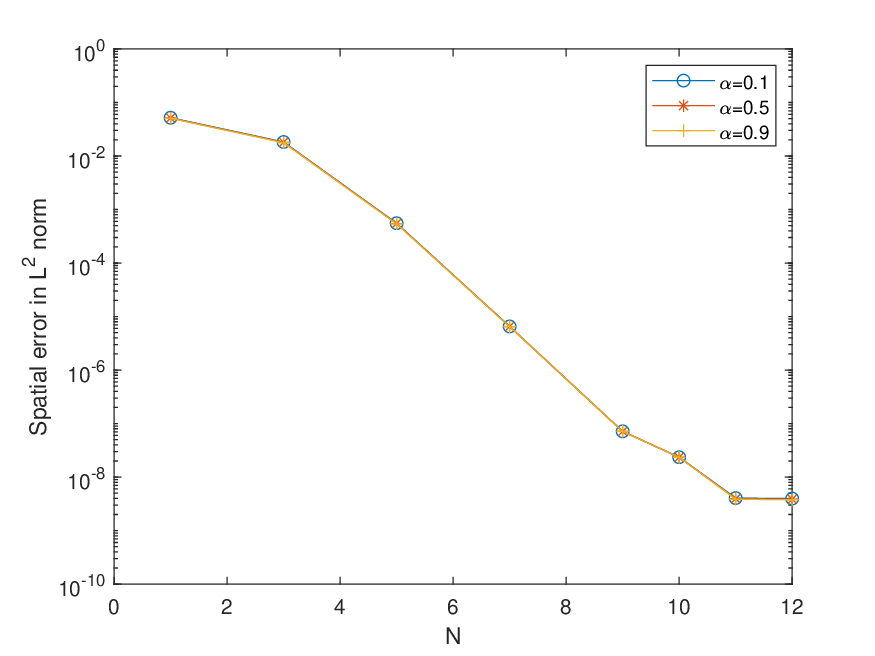}
\caption{The spatial errors in the $L^\infty$ norm (left plot) and $L^2$ norm (right plot) on the mixed mesh with $h_0=4.77\cdot10^{-9},r=2,m=1,h=0.005$ for different $\alpha$ (Example 3).}
\label{fig5}
\end{figure}

\begin{table}[htbp]
\renewcommand{\arraystretch}{1.3}
\centering
\caption{The temporal errors on uniform mesh with $h=1/M$ for four values of $\alpha$ and fixed $N=11$ (Example 3).}
\scalebox{0.86}{
\begin{tabular}{ccccccccc}
\toprule
\multirow{2}{*}{M} & \multicolumn{2}{c}{$\alpha=0.1$} & \multicolumn{2}{c}{$\alpha=0.3$} & \multicolumn{2}{c}{$\alpha=0.6$} & \multicolumn{2}{c}{$\alpha=0.9$} \\
  & $e_\infty$ & $e_2$ & $e_\infty$ & $e_2$ & $e_\infty$ & $e_2$ & $e_\infty$ & $e_2$ \\
\midrule
2 & 1.691\text{e-}08 & 4.804\text{e-}09 & 1.678\text{e-}08 & 4.653\text{e-}09 & 1.658\text{e-}08 & 4.544\text{e-}09 & 1.636\text{e-}08 & 4.606\text{e-}09 \\
3 & 1.691\text{e-}08 & 4.410\text{e-}09 & 1.678\text{e-}08 & 4.352\text{e-}09 & 1.658\text{e-}08 & 4.272\text{e-}09 & 1.636\text{e-}08 & 4.211\text{e-}09 \\
4 & 1.691\text{e-}08 & 4.259\text{e-}09 & 1.678\text{e-}08 & 4.212\text{e-}09 & 1.658\text{e-}08 & 4.138\text{e-}09 & 1.637\text{e-}08 & 4.060\text{e-}09 \\
5 & 1.691\text{e-}08 & 4.186\text{e-}09 & 1.678\text{e-}08 & 4.142\text{e-}09 & 1.658\text{e-}08 & 4.069\text{e-}09 & 1.637\text{e-}08 & 3.987\text{e-}09 \\
6 & 1.691\text{e-}08 & 4.146\text{e-}09 & 1.678\text{e-}08 & 4.103\text{e-}09 & 1.658\text{e-}08 & 4.030\text{e-}09 & 1.637\text{e-}08 & 3.947\text{e-}09 \\
\bottomrule
\end{tabular}}
\label{Tab3}
\end{table}

\begin{figure}[htbp] \centering
\subfigure[Numerical solution] {
 \label{fig6:a}
\includegraphics[width=0.3\linewidth]{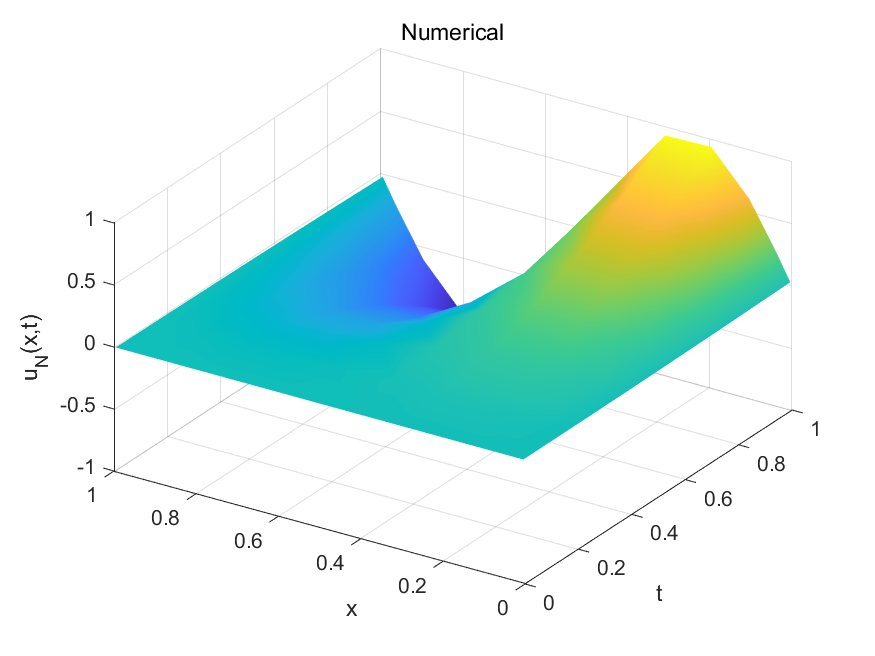}
}
\subfigure[Exact solution] {
\label{fig6:b}
\includegraphics[width=0.3\linewidth]{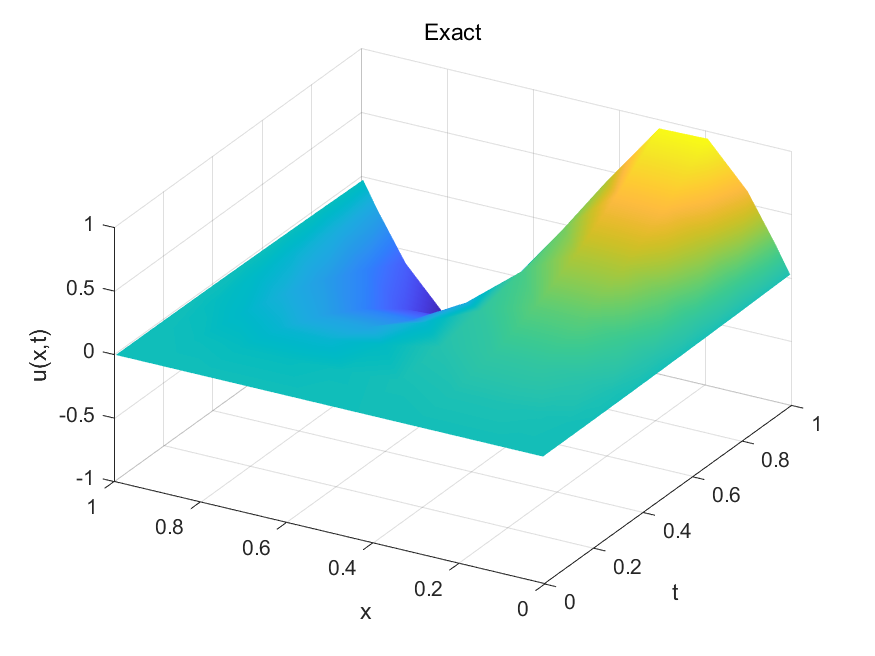}
}
\subfigure[Absolute error] {
\label{fig6:c}
\includegraphics[width=0.3\linewidth]{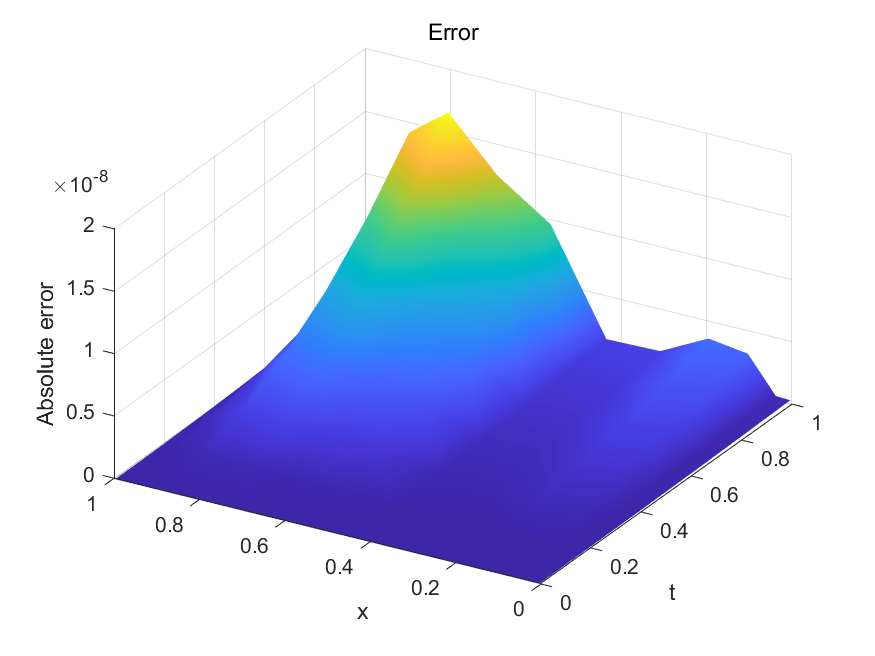}
}
\caption{Three-dimensional plots of the numerical solution (left plot), exact solution (middle plot) and absolute error (right plot) with $M=6,v=m=1,N=11$ and $\alpha=0.5$ (Example 3).}
\label{fig6}
\end{figure}

\begin{figure}[htbp] \centering
\subfigure[$\alpha=0.6$] {
 \label{fig7:a}
\includegraphics[width=5.5cm]{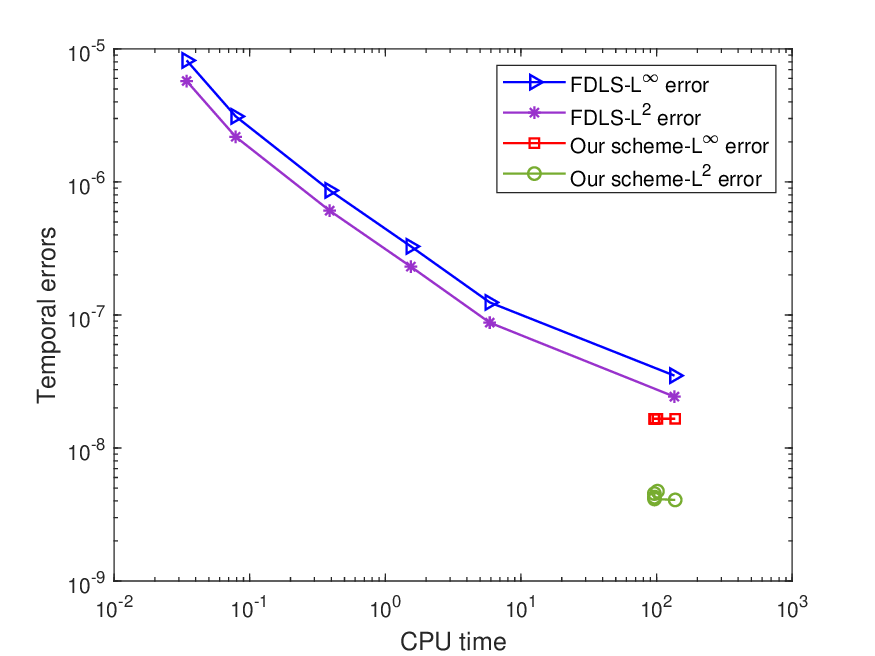}
}
\subfigure[$\alpha=0.99$] {
\label{fig7:b}
\includegraphics[width=5.5cm]{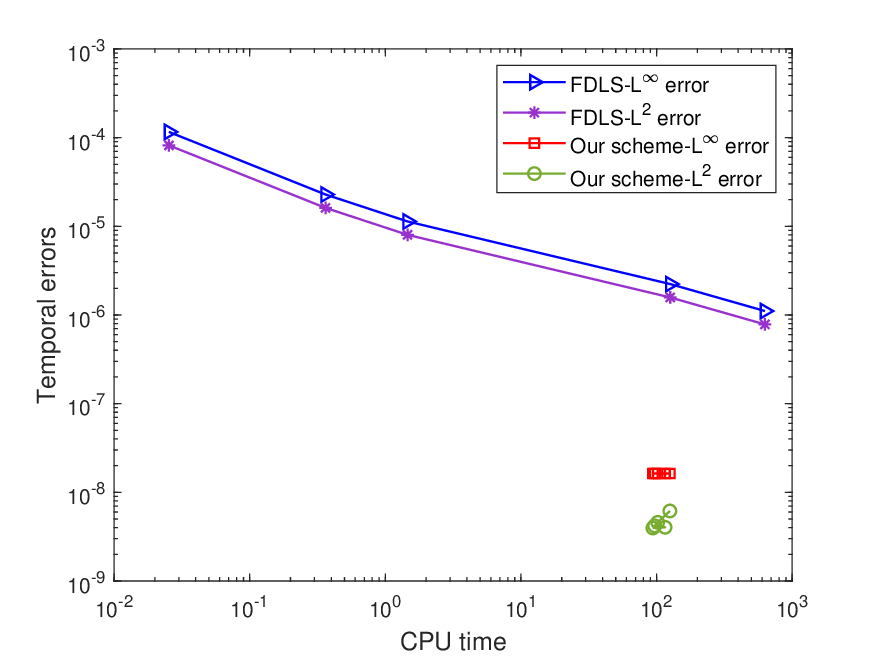}
}
\caption{The temporal errors versus CPU time with spectral accuracy in space over the time interval $t\in[0,1]$ (Example 3).}
\label{fig7}
\end{figure}

{\section{Summary and conclusions}}\label{s6}
\setcounter{subsection}{0} \setcounter{equation}{0}

In this work, we studied a new numerical method for solving time-fractional diffusion equations (\ref{1.1}) subject to Robin boundary conditions (\ref{1.3}), primarily using the line method. Regarding the spatial discretization, we applied the spectral collocation method, where the selected basis functions not only adapt to Robin boundary conditions, but also ensure exponential convergence in space, achieving spectral accuracy. For the semi-discrete FDEs obtained through the spectral collocation scheme,  we used the FHBVM$(k,s)$ method, which is an extension of the HBVM method applied to FDEs and can also be viewed as a class of Runge-Kutta methods, with $k=s=22$ on a mixed mesh. This mixed mesh consists of a graded mesh near the initial time and a uniform mesh thereafter, and is specifically designed to effectively handle problems with a nonsmooth vector field at the origin. Moreover, due to the selection of a sufficiently large $s$, this approach gain a spectral accuracy in time. For the nonlinear problem generated by FHBVMs, to enhance computational efficiency, we employed a mixed iteration strategy to automatically choose between fixed-point iteration and blended iteration under appropriate conditions. Finally, we presented three numerical examples that validate the accuracy and effectiveness of the proposed method and support the theoretical analysis.

It should be mentioned that our method can also handle the problem (\ref{1.1})-(\ref{1.3}) that includes the convection term $q(x)\partial_{x}u$. In future work, we will extend this scheme to a broader class of time-fractional partial differential equations with Robin boundary conditions. Of course, the existence and uniqueness of solutions will be prerequisites.

\vskip 0.5cm
\noindent {\small{\bf Data Availability} ~All data generated or analysed during this study are included in this published article.}
\vskip 0.5cm
\noindent {\bf\large Compliance with Ethical Standards}
\vskip 0.5cm
\noindent {\small{\bf Conflict of interest} ~The authors declare that they have no conflict of interest.}
\vskip 0.8cm


%
%



\end{document}